\newtheorem{Theorem}{Theorem}[section]
\newtheorem{Remark}[Theorem]{Remark}
\newtheorem{Lemma}[Theorem]{Lemma}
\newtheorem{Corollary}[Theorem]{Corollary}
\newtheorem{Proposition}[Theorem]{Proposition}
\newtheorem{Definition}[Theorem]{Definition}
\def\R{\mathbb{R}}
\def\d{\partial}
\def\de{\delta}
\def\a{\alpha}
\def\b{\beta}
\def\e{\varepsilon}
\def\hg{\hat{g}}
\def\bc{\bar{c}}
\def\bg{\bar{g}}
\def\ph{\phi}
\def\ps{u}
\def\lap{\Delta}
\def\hess{{\rm Hess}}
\def\th{\tilde{h}}
\def\hd{\hat{\partial}}
\begin{document}

\title{\bf Existence of CMC-foliations in asymptotically cuspidal manifolds}

\date{\today}

\author{Claudio Arezzo and Karen Corrales 
}
\date{\small}

\maketitle \vspace{-1cm}

\begin{abstract}
\noindent In this paper we prove existence and uniqueness of a CMC foliation in asymptotically cuspidal manifolds. Moreover, we study the isoperimetric problem in this case. Our proof does not require any curvature assumption and it holds for any dimension.
\end{abstract}

\section{Introduction}\label{intro}

\noindent One of the most classical questions in studying the Riemannian properties of a manifold $(M,g)$ is to describe 
its constant mean curvature (CMC) submanifolds. Besides some beautiful classical results inspired by the celebrated Alexandrov Theorem
\cite{Alex} in space forms, a general study of existence and uniqueness of foliations with (stable) CMC hypersurfaces has started in the nineties thanks to the work of Ye \cite{Ye}, who found sufficient conditions for the existence
of a CMC foliation by spheres in a small neighbourhood of a point, and by Huisken-Yau \cite{HuY} for the existence
of a CMC foliation by spheres at infinity of an asymptotically Schwarzschild end of a complete manifold.

\noindent These two works have motivated a huge amount of further research: as for Ye's Theorem, his sufficient conditions have been later weakened in a number of ways for example by \cite{MMP, N, PX}, while Huisken-Yau's work, seen as a first case of an asymptotically Euclidean manifold, has generated a (almost) complete study about existence and uniqueness of CMC foliations at infinity (and the isoperimetric properties of the leafs)
in the classical trichotomy in Riemannian geometry for which we collect a non-exhaustive but illustrative summary:

\begin{itemize}
\item
{\emph{Positive curvature:}}
The exact spherical case was studied by Brendle \cite{B} as a particular case of a general theory he developed for warped products manifolds. In this case he has shown the validity of a strong Alexandrov-type Theorem. Existence and uniqueness of a CMC foliation in the asymptotic case is, to be best of authors' knowledge, an open problem.

\item
{\emph{Zero curvature:}}
Because of its natural interest and its applications in General Relativity, this is by far the most studied case since the starting results by Huisken-Yau in the special case of asymptotically Schwarzschild case in dimension $3$. Beautiful generalizations to any dimension and to general asymptotically flat cases can be found in   \cite{Br, EM, EM2, H, Ne, QT}.

\item
{\emph{Negative curvature:}}
The analogous question in the asymptotically hyperbolic setting has started by Rigger \cite{R}, Neves-Tian \cite{NT, NT2} and Mazzeo-Pacard \cite{MP}, for the Anti-de Sitter-Schwarzchild asymptotics and the conformally compact case. More recently, Chodosh \cite{Ch} studied large isoperimetric regions in asymptotically hyperbolic manifolds, while Ambrozio \cite{Am} and Lancini \cite{L} managed to give a quantitative version of some of the previous results and to apply them provide a partial solution to the Penrose Inequality in this setting. In all these results the model space at infinity is taken to be of infinite volume.

\end{itemize}

\noindent The original motivation of this paper was to complete the above picture in the negative curvature case by analysing the  {\emph{finite volume}} situation, still extensively studied especially in connection with classical hyperbolic geometry and known  as {\emph{cusps}} in its literature. In doing so, it turned out that the answer to this problem does not depend on any curvature assumption but just on the requirement on the ``cuspidal" radial behaviour of the Riemannian metric at the ends:

 \begin{Definition}
 \label{ACdef}
 \begin{description}
 \item[(i)] Given a compact Riemannian $n$-manifold $(\Sigma,\bar{g})$, we say that  $$(N,g_N)=((0,\infty)\times\Sigma, dr^2+e^{-2r}\bg)$$ is a cusp with slice $(\Sigma,\bar{g})$.

\item[(ii)] A complete $(n+1)$-dimensional Riemannian manifold $(M,g)$ is called {\emph{$C^{k,\b}$-asymptotically cuspidal of order $\boldsymbol{\a} = (\a_1, \dots, \a_k), \,\a_l > 0 \,\, \forall \,l = 1,\dots, k,$}} if there exists a compact subset $K\subset M$ 
such that
\begin{itemize}
\item
$M\setminus K \simeq \coprod_{l=1}^k (0,\infty)\times \Sigma_l = \coprod _{l=1}^k N_l$,  
\item
For every $l$, the restriction of the diffeomorphism above
$\Phi_l \colon (0,\infty)\times\Sigma_l \rightarrow M\setminus K$ verifies \linebreak$\Phi_l^*g = 
g_{N_{l}}+h^{(l)}$,
	where $$|h^{(l)}_{ij}|_{C^{k,\b}(N_l)}\leq C e^{-r\a_l},\quad \mbox{for some $C>0$.}$$
\end{itemize}

\end{description}
\end{Definition}

\noindent It is immediate to observe that  $N$ has finite volume, the area of slices $\{r\}\times \Sigma$ 
decreases exponentially in $r$, and the scalar curvature $R_{g_N}$ of $N$ is given by $R_{g_N}=e^{2r}R_{\bg}-n(n+1)$, 
hence it is bounded only for scalar flat slices. Moreover
	each slice $\{r\}\times \Sigma$ has constant mean 
	curvature equal to $-n$ (with respect to the unit normal vector $\d_r$).

\noindent Our main results, Theorems \ref{ThCMCsurface}, \ref{ThCMCfoliation} and \ref{Thisoperimetric}, provide existence, local uniqueness and the isoperimetric property of a CMC foliation near infinity for any asymptotically cuspidal manifold. It can be summarized as follows

\medskip
\begin{Theorem} \label{Main}
Given   a $C^{2,\b}$-asymptotically cuspidal manifold $(M,g)$ with $\a _l >4, \forall \,l=1, \dots, k$. Then, for every $l$ holds:
\begin{description}
\item[(i)]
there exists $R_l>0$ such that for every $r>R_l$ there exists a constant $C(R_l)>0$ and a unique function $u^{(l)}_r \in C^{2,\b}(\Sigma_l)$ with zero mean value such that $|u^{(l)}_r|_{C^{2,\b}(\Sigma_l)}\leq Ce^{-r(\a_l-2)}$ and
$$S_r(u^{(l)}_r)=\{(r+u^{(l)}_r(x))\mid x\in \Sigma_l\}\subset N_l$$
has constant mean curvature, with respect to the metric $\Phi_l^*g$;
\item[(ii)]
having denoted  by $H(r,u_r^{(l)},g)$ to the mean curvature of $S_{r}(u^{(l)}_r)$ with respect to the metric $g$, we have
the mean curvature of the CMC perturbed slices above satisfies
$$H(r,u^{(l)}_r,\Phi_l^*g) = -n +c e^{-r(\a_l -4)},$$
for some constant $c$;
\item[(iii)]
there exists $\tilde{R_l}\geq R_l >0$ such that $\{S_{r}(u^{(l)}_r)\}_{r\in (\tilde{R_l},\infty)}$ gives a CMC foliation of a non-compact region of each cuspidal end of $(M,g)$. Moreover each leaf of this foliation is a stable critical point of the perimeter functional with volume constraint. 
\item[(iv)]
$\exists \epsilon_0 >0$ s.t. $\forall \epsilon <\epsilon_0$, the boundary of the isoperimetric region of volume $\epsilon$ is given by $S_{r}(u^{(l)}_r)$, for some $l$.
\end{description}
\end{Theorem}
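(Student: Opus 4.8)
\medskip
\noindent\textbf{Strategy of proof.} The plan is to reduce the geometric problem to a scalar PDE on the fixed compact slice. Fix one cuspidal end and drop the index $l$: write $N=(0,\infty)\times\Sigma$, $g_N=dr^2+e^{-2r}\bg$, $g=g_N+h$ with $|h|_{C^{2,\b}}\lesssim e^{-r\a}$. For $u\in C^{2,\b}(\Sigma)$ let $H(r,u,g)$ be the mean curvature of $S_r(u)=\{(r+u(x),x)\}$ with respect to $g$; since $\d_r$ is the unit normal of $g_N$, the function $u$ is literally the normal displacement in the model. I would first turn ``$S_r(u)$ is CMC'' into the scalar equation
$$\Pi^{\perp}H(r,u,g)=0,\qquad \int_{\Sigma}u\,dV_{\bg}=0,$$
where $\Pi^{\perp}$ is the $L^{2}(\Sigma,\bg)$-projection off the constants: the hypersurface is CMC iff the non-constant part of $H$ vanishes, and the constant part of $H$ then just records the value of the mean curvature, which will give (ii). This is a Lyapunov--Schmidt splitting, and the mean-value normalization on $u$ is precisely what makes the reduced problem well posed.

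\medskip
\noindent Second, I would expand $H(r,u,g)=-n+\mathcal L_r u+\mathcal Q_r(u)+\mathcal E_r$, where $\mathcal L_r=\d_u H(r,\cdot,g_N)|_{0}$ is the Jacobi operator of the model slice, $\mathcal Q_r$ collects the terms at least quadratic in $u$ and its $\bg$-derivatives, and $\mathcal E_r=H(r,0,g)-H(r,0,g_N)$ is the error produced by $h$. The decisive point is that, because the cuspidal warping $f(r)=e^{-r}$ satisfies $f''=f$, the model slice has $|A|^{2}=n$ and $\mathrm{Ric}_{g_N}(\d_r,\d_r)=-n\,f''/f=-n$, so the potential term cancels exactly and
$$\mathcal L_r=\lap_{e^{-2r}\bg}+\big(|A|^{2}+\mathrm{Ric}_{g_N}(\d_r,\d_r)\big)=e^{2r}\lap_{\bg}.$$
Thus $\mathcal L_r$ has no zeroth order term and $\ker\mathcal L_r$ consists of the constants — this is the degeneracy that forces the constraint $\int_\Sigma u\,dV_{\bg}=0$. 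On the mean-zero Hölder spaces $\Pi^{\perp}\mathcal L_r=e^{2r}\lap_{\bg}$ is an isomorphism whose inverse, since $\bg$ is fixed, is bounded by a definite power of $e^{-r}$ times $\lambda_{1}(\bg)^{-1}$, uniformly in $r$.

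\medskip
\noindent Third, I would solve the reduced equation by the fixed point $u=\mathcal T_r(u):=-(\Pi^{\perp}\mathcal L_r)^{-1}\Pi^{\perp}(\mathcal Q_r(u)+\mathcal E_r)$, running a contraction argument on the ball $\{\,|u|_{C^{2,\b}(\Sigma)}\le C e^{-r(\a-2)}\,\}$ in $C^{2,\b}_0(\Sigma)$. This requires estimating $\mathcal E_r$ and the mapping/Lipschitz bounds of $\mathcal Q_r$ in the $r$-weighted norms; the hypothesis $\a>4$ is exactly what makes $\mathcal T_r(0)$ and the quadratic error small compared with the linear scale, so that for $r>R$ the ball is preserved and $\mathcal T_r$ contracts. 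Uniqueness in the ball is then automatic, Schauder bootstrap gives $C^{2,\b}$ regularity, and differentiating the fixed-point relation in $r$ (again using invertibility of $\Pi^{\perp}\mathcal L_r$) yields $C^{1}$ dependence of $u_r$ on $r$; this is (i). Inserting $u_r$ into the constant component of $H$ and expanding — noting that $\int_\Sigma \mathcal L_r u_r\,dV_{\bg}=0$ — leaves $H(r,u_r,\Phi^{*}g)=-n+c\,e^{-r(\a-4)}$, which is (ii).

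\medskip
\noindent Finally, for (iii)--(iv): the leaves $S_r(u_r)$, $r>\tilde R$, foliate a neighbourhood of the cuspidal infinity as soon as the lapse $x\mapsto 1+\d_r u_r(x)$ is positive, which holds for $r$ large since $\d_r u_r$ is of the same small order as $u_r$; each leaf is then a strictly stable volume-constrained critical point of perimeter, because its stability operator is $\mathcal L_r$ plus an $O(e^{-r\a})$ perturbation, hence $\ge\big(e^{2r}\lambda_1(\bg)-O(e^{-r\a})\big)>0$ on mean-zero test functions — equivalently, the positive lapse certifies stability. For the isoperimetric claim I would use a perimeter comparison: a cuspidal superlevel region of volume $V$ has perimeter $\sim V$, whereas any region of volume $V$ contained in a fixed compact part of $M$ has perimeter $\gtrsim V^{n/(n+1)}\gg V$ for $V$ small; together with $\mathrm{vol}(M)<\infty$ — so that volume can only escape down the cusps — this forces an isoperimetric region of small volume $\e$ to be a connected subset of a single cuspidal end, deep inside, whose boundary is a connected CMC hypersurface close to a slice, hence by the uniqueness in (i) equal to some $S_r(u_r)$; a calibration/comparison argument using the positive lapse then confirms that these leaves are perimeter-minimizing for their enclosed volume. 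The hard part, beyond the $r$-weighted bookkeeping imposed by the degeneracy of $\mathcal L_r$, will be this last step: proving that isoperimetric regions of small volume exist at all and ruling out that they lie in the compact core or break into several components — the place where the finite-volume, ``thin cusp'' geometry must be used quantitatively.
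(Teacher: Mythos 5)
Your treatment of (i)--(iii) is essentially the paper's: the same Lyapunov--Schmidt splitting off the constants, the same identification of the linearized operator at a model slice as $e^{2r}\lap_{\bg}$ with kernel the constants (the paper derives this from the explicit graph formula for $H$ in Proposition \ref{Hg} rather than from the cancellation $|A|^2+\mathrm{Ric}(\nu,\nu)=n-n=0$, but the outcome is identical), the same weighted smallness $|u|_{C^{2,\b}(\Sigma)}\lesssim e^{-r(\a-2)}$ with $\de=O(e^{-r(\a-4)})$ for the value of the mean curvature, and the same two certificates for (iii): positivity of the lapse $1+\d_r u$ for the foliation property and positivity of the perturbed Jacobi form on mean-zero functions for stability. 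The only methodological differences are harmless: you close the existence argument by a contraction on a ball, whereas the paper runs the explicit iteration \eqref{iterative} with the compensating constants $\de_j$ and concludes by uniform bounds plus Ascoli--Arzel\`a, and it obtains uniqueness and smooth dependence on $r$ from the implicit function theorem in Theorem \ref{ThCMCfoliation} rather than from differentiating the fixed-point relation. If anything your contraction argument is the cleaner of the two, since it yields convergence of the whole sequence and uniqueness in the ball in one stroke.

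For (iv) your sketch diverges from the paper and has a real gap at the identification step. The localization is handled differently (the paper bounds the mean curvature of $\d\Omega_v$ by $n$ using the first variation of $A-nV$ and then pushes $\Omega_v$ deep into a cusp via the monotonicity formula; your perimeter comparison of $V$ against $V^{n/(n+1)}$ is a plausible substitute, but you would still need to exclude regions meeting the compact core or spread over several ends, which you acknowledge). The genuine missing idea is how to pass from ``$\d\Omega_v$ is a connected CMC hypersurface far down one cusp'' to ``$\d\Omega_v=S_{r_v}(u(r_v,g))$'': the uniqueness in (i) applies only to normal graphs over slices with small $C^{2,\b}$ norm and zero mean, and nothing in your argument shows that $\d\Omega_v$ is such a graph --- a priori it could be non-graphical or drift in $r$. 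The paper closes this by a sliding and tangency argument: translate the leaves $S_r(u(r,g))$ until first contact with $\d\Omega_v$, represent both hypersurfaces as local graphs near the contact point, apply the strong maximum principle for the CMC equation, and then invoke analyticity (unique continuation) to conclude that the two hypersurfaces coincide globally. Some device of this kind is indispensable and should replace the bare appeal to the uniqueness statement of (i).
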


\noindent It is interesting to put the above result also in light of a purely PDE's approach. All cases appearing in the Riemannian trichotomy mentioned above, fall into the category of {\emph{warped product}} metrics, \linebreak$g= dr^2+\phi(r)^2\bg$, and it is easy to see that the linearized mean curvature operator takes the form

	$$ L_{r}(u)=-\phi^{-2}(r)\left(\lap_{\bg}\ps+n\ps\left((\dot{\ph}(r))^2-\ph(r)\ddot{\ph}(r)\right)\right).$$

\noindent Of course failure of invertibility of this operator gives one measure of the difficulty of the problem studied in this paper. From this point of view, while at one of the spectrum we find Schwarzschild, Anti-de Sitter-Schwarzschild, and non-trivial cones in the sense of \cite{CEV}, where the above operator is actually invertible. At the other extreme, we find the general Euclidean case or the spherical one. In this perspective, it is immediate to see that the cuspidal case is actually the only case where the kernel of $L_r$ is non-zero and it is spanned by constant functions.

\begin{Remark}
It would be interesting to investigate the relationship between the cuspidal case here studied and the original Ye's problem for small neighbourhoods of Riemannian manifolds. It is in fact easy to observe that if $(\Sigma, \bar{g})$ is the round sphere then, under the change of coordinates $s=e^{-e^{r}}$, for $s\rightarrow 0$,
$$e^{2u_0}(g_N +h)  \sim (ds^2 + s^2\bar{g}) + \mathcal{O}\left(\frac{s^2}{\log s}\right), $$
if $|h_{ij}|_{C^{k,\b}(N)}\leq C e^{-r\a},$ where $u_0 = r-e^r$ and $\a>4$ as in our results. It seems quite intriguing to study the correspondence between Ye's foliation and ours via this conformal change.
\end{Remark} 

\noindent The organization of the paper is as follows: In Section \ref{notation}, we will collect basic facts, notations and properties of asymptotically cuspidal manifolds and mean curvature. Section \ref{existenceS} will be devoted to provide a proof of parts (i) and (ii) of Theorem \ref{Main}. 
 In Section \ref{foliation}, we give an alternative proof of part (i) which allows also to prove part (iii). Finally, Section \ref{isoperimetric} contains the proof of part (iv).
 
 \vspace{2mm}

\noindent \textbf{Aknowledgements:} During the preparation of this work we benefited from many beautiful and instructive discussions with
Francesco Maggi, Samuele Lancini and Lorenzo Mazzieri for which we are deeply grateful.

\section{Preliminaries}\label{notation}
Throughout this paper, we will consider $(M,g)$ as a $C^{2,\b}$-asymptotically cuspidal manifold of order $\a>4$ and as a common notation, the geometric quantities relative to $(\Sigma,\bg)$ will be over line.

All the proofs involve working on one end at each time, so from now on, we assume $l=1$, $\boldsymbol{\a}=(\a_1)$ and $\a_1 = \a$. For simplicity, we will refer to each end as $(N=(0,\infty)\times \Sigma,g=g_N+h).$
 

\begin{Definition}
	\label{slice}
Let $r_0>0$ and $u\in C^{2,\beta}(\Sigma)$ with $\b\in (0,1)$. We define the hypersurface 
$$S_{r_0}(u)=\{(r_0+u(x),x)\mid x\in\Sigma \}\subset N,$$
as the graph of $\ps$ over the slice $\Sigma_{r_0}:=\{r_0\}\times \Sigma.$
\end{Definition}

It is convenient to write $S_{r_0}(u)$ as a level set of a function and to find a suitable set of coordinates on $N$ adopted to the graph of $u$. So, we set
$$F:(0,\infty)\times \Sigma\to \R,\quad F(r,x)=r-(r_0+\ps(x)),\quad x=(x_1,\cdots ,x_n).$$ 

Straightforward computations on $F$ imply the following 
\begin{Proposition}
	\label{F}
	If $\d_k$ denotes the partial derivative with respect $x_k$, then
	\begin{description}
		\item[(i)] The vectors $\{\d_k \ps\d_r+\d_k\}$ are $g$-orthogonal to $\nabla F$.
		\item[(ii)] $|\nabla F|=\sqrt{1+e^{2r}|\overline{\nabla}\ps|^2_{\bg}+E^F},$	where $E^F=E^F(r,h,\d_k u,(\d_k u)^2).$  
		\item[(ii)] There exists a constant $\mathcal{C}_1$ such that $|E^F(u=0)|_{C^{0,\b}(\Sigma)}\leq\mathcal{C}_1 e^{-r_0\a}.$
	\end{description}
\end{Proposition}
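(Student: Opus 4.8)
The plan is to work entirely in the coordinates $(r,x_1,\dots,x_n)$ on $N$, viewing $g=g_N+h$ as a perturbation of the explicit metric $g_N=dr^2+e^{-2r}\bg$, whose inverse has components $g_N^{rr}=1$, $g_N^{rk}=0$, $g_N^{kl}=e^{2r}\bg^{kl}$. For part (i), the key remark is that $dF=dr-\sum_k\d_k u\,dx_k$, so for each $k$, using that $u$ does not depend on $r$,
$$dF\big(\d_k u\,\d_r+\d_k\big)=\d_k u\,dr(\d_r)+dr(\d_k)-\d_k u\,du(\d_r)-du(\d_k)=\d_k u-\d_k u=0.$$
Since $g(\nabla F,V)=dF(V)$ for every vector field $V$, this is exactly the asserted $g$-orthogonality; note also that $\d_k u\,\d_r+\d_k$ are precisely the coordinate tangent vectors along the graph $S_{r_0}(u)=\{F=0\}$.

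For part (ii), I would compute $|\nabla F|^2_g=dF(\nabla F)=g^{ab}\,\d_aF\,\d_bF$ with $a,b$ running over $\{r,1,\dots,n\}$, $\d_rF=1$, $\d_kF=-\d_k u$, so that
$$|\nabla F|^2_g=g^{rr}-2\sum_k g^{rk}\d_k u+\sum_{k,l}g^{kl}\d_k u\,\d_l u.$$
The first observation is that for $r$ large the matrix $g_N^{-1}h$ has entries $O(e^{-r(\a-2)})$ — the spatial block of $g_N^{-1}$ carries a factor $e^{2r}$, but since $\a>2$ this is still beaten by the decay $|h_{ij}|\le Ce^{-r\a}$ — hence $g$ is invertible near infinity and, by the Neumann series, $g^{ab}=g_N^{ab}+(\text{terms each carrying at least one factor of }h)$. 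Substituting $g_N^{rr}=1$, $g_N^{rk}=0$, $g_N^{kl}=e^{2r}\bg^{kl}$ and isolating the contribution of $g_N$ gives $|\nabla F|^2_g=1+e^{2r}|\overline{\nabla}u|^2_{\bg}+E^F$ with
$$E^F:=(g^{rr}-1)-2\sum_k g^{rk}\d_k u+\sum_{k,l}\big(g^{kl}-e^{2r}\bg^{kl}\big)\d_k u\,\d_l u,$$
which manifestly depends only on $r$, on $h$ through $g^{ab}$ (pointwise, not through the derivatives of $h$), and polynomially on the $\d_k u$ to second order; taking the square root yields the stated identity.

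For the last claim, setting $u=0$ annihilates every term but the first, so $E^F(u=0)=g^{rr}-1$ restricted to $\Sigma_{r_0}$. Expanding $g^{-1}=\sum_{m\ge0}(-g_N^{-1}h)^m g_N^{-1}$ (convergent once $r_0$ is large), the leading correction to $g^{rr}$ is $-g_N^{rr}h_{rr}g_N^{rr}=-h_{rr}$, and all further terms are of order $\ge 2$ in $h$, hence $O(e^{-r(2\a-2)})=o(e^{-r\a})$ since $\a>2$. Because the restriction of $h$ to a slice satisfies $|h_{ij}(r_0,\cdot)|_{C^{0,\b}(\Sigma)}\le Ce^{-r_0\a}$ and multiplication and composition with fixed smooth functions are continuous on $C^{0,\b}(\Sigma)$, this yields $|E^F(u=0)|_{C^{0,\b}(\Sigma)}=|g^{rr}(r_0,\cdot)-1|_{C^{0,\b}(\Sigma)}\le\mathcal{C}_1 e^{-r_0\a}$.

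The only genuinely delicate point is the bookkeeping of the two scales in inverting $g$: one must keep in mind that the spatial components of $g_N^{-1}$ grow like $e^{2r}$ while those of $h$ and its derivatives decay like $e^{-r\a}$, so that the relative perturbation $\|g_N^{-1}h\|$ still tends to $0$, the Neumann expansion converges, and each successive term gains a genuine power of $e^{-r\a}$ after accounting for the $e^{2r}$ factors; once this is in place, the functional form of $E^F$ and the Hölder bound follow from the standard calculus of (weighted) Hölder spaces. Everything else is a direct coordinate computation.
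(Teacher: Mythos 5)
Your computation is correct and is exactly the ``straightforward computation'' the paper invokes without writing out: (i) is the duality $g(\nabla F,V)=dF(V)$ applied to $dF=dr-\sum_k\partial_ku\,dx_k$, (ii) is $|\nabla F|^2=g^{ab}\partial_aF\partial_bF$ with the $g_N$-contribution isolated, and (iii) is the Neumann expansion of $g^{-1}$ with the correct bookkeeping of the $e^{2r}$ growth of $g_N^{kl}$ against the $e^{-r\alpha}$ decay of $h$. No gaps; this matches the paper's (omitted) argument.
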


Therefore, the seeked set of coordinates on $S_{r_0}(u)$ is given by  $\{\hd_F=|\nabla F|^{-2}\nabla F, \hd_k=\d_k \ps\d_ r+\d_k\}$ and the metric $g$ in this coordinates is given by
\begin{equation}
\begin{split}
\label{hg}
g_{FF}&:=g(\hd_F,\hd_F)=|\nabla F|^{-2}\\
g_{Fj}&:=g_{iF}=g\left(\hd_F,\hd_j\right)=g\left(\hd_i,\hd_F\right)=0\\
g_{ij}&:=g\left(\hd_i,\hd_j\right)=\d_i\ps\d_j\ps g_{00}+\d_i\ps g_{0j}+\d_j\ps g_{i0}+g_{ij}.
\end{split}
\end{equation}

Given a function $u\in C^{2,\beta}(\Sigma),$ we will denote by $H(r_0,u,g)$ to the mean curvature of $S_{r_0}(u)$ with respect to the metric $g$.

The following formula, inspired by \cite{L} for the hyperbolic case of infinite volume, will be crucial for the rest of the paper
\begin{Proposition}
		\label{Hg}
	Let $u\in C^{2,\beta}(\Sigma)$ be a positive function. Then the mean curvature $H(r_0,u,g)$ satisfies
	\begin{equation*}
	\begin{split}
	H(r_0,u,g)\sqrt{1+e^{2r}|\overline{\nabla}\ps|^2_{\bg}+E^F}\, =-e^{2r}\lap_{\bar{g}}\ps+e^{4r}\frac{\hess_{\bg}\ps(\overline{\nabla}\ps,\overline{\nabla}\ps)}{1+e^{2r}|\overline{\nabla}\ps|^2_{\bg}}-\frac{e^{2r}|\overline{\nabla}\ps|^2_{\bg}}{1+e^{2r}|\overline{\nabla}\ps|^2_{\bg}}-n+E^H,
	\end{split}
	\end{equation*}
	
	where $r=r_0+u(x)$ and $E^H=E^H(r,h,\d_k h,\d_k u, \d^2_{ij}u).$ 
\end{Proposition}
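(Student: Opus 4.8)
\medskip
\noindent\textbf{Proof strategy.} The plan is to present $S_{r_0}(u)$ as the level set $\{F=0\}$ of the function $F$ introduced above and to compute its mean curvature as the divergence of a unit normal. I would take $\nu=|\nabla F|^{-1}\nabla F$, which points toward increasing $r$ and is a positive multiple of the field $\hd_F$ of Proposition \ref{F}, so that, by the classical expression for the mean curvature of a level hypersurface,
\[
H(r_0,u,g)=\mathrm{div}_g\!\left(\frac{\nabla F}{|\nabla F|}\right)\qquad\text{on }\{F=0\}.
\]
The sign is pinned down by the requirement that for $u\equiv 0$ this reproduce the value $-n$ recorded after Definition \ref{ACdef}: indeed $\mathrm{div}_{g_N}(\d_r)=\big(\sqrt{\det g_N}\big)^{-1}\d_r\sqrt{\det g_N}=-n$ since $\sqrt{\det g_N}=e^{-nr}\sqrt{\det\bg}$, and this equality will serve as a running consistency check.

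\smallskip
\noindent First I would carry out the computation for the exact cusp metric $g_N=dr^2+e^{-2r}\bg$. Here $\nabla F=\d_r-e^{2r}\overline{\nabla}u$, where $\overline{\nabla}u$ denotes the $\bg$-gradient on $\Sigma$, and $|\nabla F|=W_0:=\sqrt{1+e^{2r}|\overline{\nabla}u|^2_{\bg}}$. Using $\mathrm{div}_{g_N}(X)=\big(\sqrt{\det g_N}\big)^{-1}\d_\mu\big(\sqrt{\det g_N}\,X^\mu\big)$, the computation splits into the $\d_r$-derivative of $W_0^{-1}$ and the $\Sigma$-divergence of $-e^{2r}W_0^{-1}\overline{\nabla}u$; the only non-routine ingredients are $\d_r W_0=e^{2r}|\overline{\nabla}u|^2_{\bg}/W_0$, the intrinsic identity $\langle\overline{\nabla}u,\overline{\nabla}(|\overline{\nabla}u|^2_{\bg})\rangle_{\bg}=2\,\hess_{\bg}u(\overline{\nabla}u,\overline{\nabla}u)$, and the standard coordinate expression $\big(\sqrt{\det\bg}\big)^{-1}\d_k\big(\sqrt{\det\bg}\,\bg^{kl}\d_l u\big)=\lap_{\bg}u$. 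Collecting the resulting terms and multiplying by $W_0$ produces exactly the right-hand side of the statement with $E^H\equiv 0$ and $E^F\equiv 0$; restricting the resulting function on $N$ to $\{F=0\}$ is precisely what turns the free variable $r$ into $r_0+u(x)$.

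\smallskip
\noindent Finally I would reinstate the perturbation by writing $g=g_N+h$ throughout. Since $\a>4$, the induced corrections to the inverse metric and to the volume element decay at infinity — for instance $g^{kl}=e^{2r}\bg^{kl}+O(e^{-r(\a-4)})$ — so that $|\nabla F|^2_g=W_0^2+E^F$ with $E^F$ as in Proposition \ref{F}, and the coordinate divergence differentiates $g$ (hence $h$) only once. Consequently every term produced by $h$, together with the discrepancy between $|\nabla F|_g$ and $W_0$ in the denominators, is a smooth expression in $r$, $h$, $\d_k h$, $\d_k u$ and $\d^2_{ij}u$; naming the sum of all of them $E^H$ yields the claimed identity, and the decay bound $|E^H|\le Ce^{-r\a}$ that will be needed later follows at once from Definition \ref{ACdef}. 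I expect the main obstacle to be organizational rather than conceptual: one must arrange the perturbative terms carefully enough to see transparently that no second derivatives of $h$ enter and that the remainder has exactly the asserted functional dependence. A route worth keeping in reserve is to compute $\hess$ directly in the adapted frame $\{\hd_F,\hd_k\}$ of Proposition \ref{F}, exploiting the block form \eqref{hg} of $g$ and the relation $\nu=|\nabla F|\,\hd_F$, and then trace against the inverse of the induced metric $g_{ij}$; this replaces the divergence identity by a Christoffel-symbol computation but leads to the same grouping of terms.
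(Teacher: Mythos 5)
Your argument is correct and reaches the stated identity, but by a genuinely different route from the paper. You compute $H$ as $\mathrm{div}_g(\nabla F/|\nabla F|)$, do the exact model $g_N$ first (your model computation reproduces formula \eqref{Hm} exactly), and then treat $h$ perturbatively; the paper instead takes what you call your ``route in reserve'': it writes $\hat{\mathrm{II}}_{ij}=|\nabla F|^{-1}\hess F(\hd_i,\hd_j)$ in the adapted frame of Proposition \ref{F}, expands the Christoffel symbols of $g=g_N+h$ with explicit error terms, inverts the induced metric, and traces. The payoff of the paper's choice is not the identity itself but the \emph{explicit} expression \eqref{EH} for $E^H$ as $\hg^{ij}E^{Hess}_{ij}+\hat{E}^{ij}\hess F(\hd_i,\hd_j)-\hat{E}^{ij}E^{Hess}_{ij}$, which is what gets differentiated in $t$ in Propositions \ref{LemaL} and \ref{LemaQ}; your less explicit ``sum of all perturbative terms'' would make those later estimates harder to extract. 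One quantitative slip you should fix: the bound $|E^H|\le Ce^{-r\alpha}$ does \emph{not} follow ``at once''. Tracing against the inverse induced metric costs a factor $e^{2r}$ (indeed you yourself note $g^{kl}=e^{2r}\bg^{kl}+\dots$), so terms like $e^{2r}\bg^{ij}H_{ij0}$ only decay like $e^{-r(\alpha-2)}$; this is exactly the rate the paper proves in Corollary \ref{H0}, and the discrepancy matters because the hypothesis $\alpha>4$ is calibrated to these losses. Since the Proposition itself asserts no decay rate for $E^H$, this does not invalidate your proof of the statement, but the claimed corollary bound is wrong as written.
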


\begin{proof}
	In coordinates $\{\hd_F, \hd_i\}$ on the graph $S_{r_0}(u)$, the coefficients of the second fundamental form is defined by
	$$\hat{\mbox{II}}_{ij}=-\hg\left(\nabla_{\hd_i}\hd_j,\nu(S_{r_0}(u))\right),$$
	where $\nu(S_{r_0}(u))=|\nabla F|\hd_F$ is the unit normal vector to $S_{r_0}(u).$
	
	Following equation \eqref{hg}, we have
	$$\hat{\mbox{II}}_{ij}=-|\nabla F|\hg\left(\hat{\Gamma}_{ij}^k \hd_k, \hd_F\right)=-|\nabla F|\hat{\Gamma}_{ij}^F \hg_{FF}=|\nabla F|^{-1}\mbox{Hess}F\left(\hd_i,\hd_j\right).$$
	
	Therefore, the mean curvature $H(r_0,u,g)$ in coordinates $\{\hd_F, \hd_i\}$, is given by
	\begin{equation}
	\label{1}
	H(r_0,u,g)=|\nabla F|^{-1}\hg^{ij}\mbox{Hess}F\left(\hd_i,\hd_j\right).
	\end{equation}
	
	First, we compute the Hessian in coordinates $\{r,x_i\}$ 
	\begin{equation*}
	\begin{split}
	\mbox{Hess}F\left(\hd_i,\hd_j\right)&=\mbox{Hess}F\left(\d_i \ps\d_r+\d_i,\d_j \ps\d_r+\d_j\right)\\
	&=\d_i \ps\d_j \ps\left[-\Gamma_{00}^0+\Gamma_{00}^k\d_k \ps\right]+\d_i \ps\left[-\Gamma_{0j}^0+\Gamma_{0j}^k\d_k \ps\right]\\
	&\;\;+\d_j \ps\left[-\Gamma_{0i}^0+\Gamma_{0i}^k\d_k \ps\right]+\left[-\Gamma_{ij}^0-\d^2_{ij} \ps+\Gamma_{ij}^k\d_k \ps\right]
	\end{split}
	\end{equation*}
	
	Straightforward computations show that Christoffel symbols are given by
\begin{equation*}
	\begin{split}
	\Gamma_{00}^0&=H_{000}=\mathcal{O} (e^{-r\a}),\\
	\Gamma_{00}^k&=H_{00k}=\mathcal{O} (e^{-r(\a-2)}),\\
	\Gamma_{0j}^0&=H_{0j0}=\mathcal{O} (e^{-r\a}),\\
	\Gamma_{ij}^0&=e^{-2r}\bg_{ij}+H_{ij0}=e^{-2r}\bg_{ij}+\mathcal{O} (e^{-r\a}),\\
	\Gamma_{0i}^k&=-\de_i^k+H_{0ik}=-\de_i^k+\mathcal{O} (e^{-r(\a-2)}),\\
	\Gamma_{ij}^k&=\bar{\Gamma}_{ij}^k+H_{ijk}=\bar{\Gamma}_{ij}^k+\mathcal{O} (e^{-r(\a-2)})
	\end{split}
\end{equation*}
		
	Thus, 
	\begin{equation}
	\begin{split}
	\label{Hess}
	\mbox{Hess}F\left(\hd_i,\hd_j\right)&=-2\d_i \ps\d_j \ps-e^{-2r}\bg_{ij}-\d^2_{ij} \ps+\bar{\Gamma}_{ij}^k\d_k \ps+E^{Hess}_{ij}(r,h,\d_k h,\d_k u,(\d_k u)^2),
	\end{split}
	\end{equation}
	where
	\begin{equation*}
	\begin{split}
	E^{Hess}_{ij}=-H_{000}\d_i \ps\d_j \ps-H_{0j0}\d_i \ps-H_{0i0}\d_j \ps-H_{ij0}+\sum_{k=1}^n(H_{00k}+H_{0jk}+H_{0ik}+H_{ijk})\d_k \ps.
	\end{split}
	\end{equation*}
	
	On the other hand, a direct computation shows that inverse metric satisfies
	$$\hg^{ij}=e^{2r}{\bg}^{ij}+\th^{ij}-\frac{g_{00}(g^{ik}\d_k u)(g^{jl}\d_l u)}{1+|\nabla \ps|^2g_{00}},$$
	where $\th=\mathcal{O} (e^{-r\a}).$ 
	Then, 
	\begin{equation*}
	\begin{split}
	\hg^{ij}
	&=e^{2r}\bg^{ij}+\th^{ij}-\frac{e^{4r}(\bg^{ik}\d_k u)(\bg^{jl}\d_l u)}{1+e^{2r}|\overline{\nabla}u|^2_{\bg}}+e_2^{ij}+e_1^{ij}.
	\end{split}
	\end{equation*}
	where
	\begin{equation*}
	\begin{split}
	e_1^{ij}&=-\left(\frac{e^{4r}h_{00}(\bg^{ik}\d_k u)(\bg^{jl}\d_l u)+[e^{2r}((\bg^{ik}\d_k u)\th^{jl}\d_l u+(\bg^{jl}\d_l u)\th^{ik}\d_k u)+\th^{ik}\d_k u\th^{jl}\d_l u](1+h_{00})}{1+(e^{2r}|\overline{\nabla}u|^2_{\bg}+\th^{ij}\d_i u \d_j u)(1+h_{00})}\right)\\
	e_2^{ij}&=\frac{e^{4r}(\bg^{ik}\d_k u)(\bg^{jl}\d_l u)\left(e^{2r}h_{00}|\overline{\nabla}u|^2_{\bg}+\th^{ij}\d_i u \d_j u(1+h_{00})\right)}{\left(1+e^{2r}|\overline{\nabla}u|^2_{\bg}\right)\left(1+(e^{2r}|\overline{\nabla}u|^2_{\bg}+\th^{ij}\d_i u \d_j u)(1+h_{00})\right)}
	\end{split}
	\end{equation*}
	
	Thus,
	\begin{equation}
	\label{metric}
	\hg^{ij}=e^{2r}\bg^{ij}-\frac{e^{4r}(\bg^{ik}\d_k u)(\bg^{jl}\d_l u)}{1+e^{2r}|\overline{\nabla}u|^2_{\bg}}+\hat{E}^{ij}(r,h,(\d_k u)^2,(\d_k u)^4).
	\end{equation}
	where, $\hat{E}^{ij}=\th^{ij}+e_1^{ij}+e_2^{ij}.$	

	Therefore, using Proposition \ref{F}(ii) and replacing the equations \eqref{Hess} and \eqref{metric} in \eqref{1}, we obtain
	\begin{equation*}
	\begin{split}
	H(r_0,u,g)\sqrt{1+e^{2r}|\overline{\nabla}\ps|^2_{\bg}+E^F}
	&=-e^{2r}\lap_{\bar{g}}\ps+e^{4r}\frac{\hess_{\bg}\ps(\overline{\nabla}\ps,\overline{\nabla}\ps)}{1+e^{2r}|\overline{\nabla}\ps|^2_{\bg}}-\frac{e^{2r}|\overline{\nabla}\ps|^2_{\bg}}{1+e^{2r}|\overline{\nabla}\ps|^2_{\bg}}-n+E^H,
	\end{split}
	\end{equation*}
	where 
	\begin{equation}
	\label{EH}
	E^H=E^H(r,h,\d_k h,\d_k u, \d^2_{ij}u)=\hg^{ij}E^{Hess}_{ij}+\hat{E}^{ij}\mbox{Hess} F(\hd_i,\hd_j)-\hat{E}^{ij}E^{Hess}_{ij}.
	\end{equation}
	
\end{proof}

Notice that in the model case $g=g_N$, the mean curvature of the graph $S_{r_0}(u)$ is just
\begin{equation}
\label{Hm}
\begin{split}
H(r_0,u,g_N)&=\frac{1}{\sqrt{1+e^{2r}|\overline{\nabla}\ps|^2_{\bg}}}\left(-e^{2r}\lap_{\bar{g}}\ps+e^{4r}\frac{\hess_{\bg}\ps(\overline{\nabla}\ps,\overline{\nabla}\ps)}{1+e^{2r}|\overline{\nabla}\ps|^2_{\bg}}-\frac{e^{2r}|\overline{\nabla}\ps|^2_{\bg}}{1+e^{2r}|\overline{\nabla}\ps|^2_{\bg}}-n\right).
\end{split}
\end{equation}

The first important application of Proposition \ref{Hg} is
\begin{Corollary}
	\label{H0}
There exists a constant $\mathcal{C}_2$ such that $$|E^H(u=0)|_{C^{0,\b}(\Sigma)}\leq\mathcal{C}_2 e^{-r_0(\a-2)}.$$
\end{Corollary}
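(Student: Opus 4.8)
The plan is to specialize the formula \eqref{EH} for $E^H$ to the case $u=0$ and estimate each of the three terms using the bounds already established. When $u=0$, all the derivative-of-$u$ terms vanish, so $E^{Hess}_{ij}(u=0) = -H_{ij0} = \mathcal{O}(e^{-r_0\a})$ directly from the Christoffel symbol estimates in the proof of Proposition \ref{Hg}; similarly $\hat{E}^{ij}(u=0) = \th^{ij}(u=0) = \mathcal{O}(e^{-r_0\a})$ since the $e_1^{ij}, e_2^{ij}$ contributions all carry factors of $\d_k u$ and drop out. The only term that is not already $\mathcal{O}(e^{-r_0\a})$ is $\hg^{ij}E^{Hess}_{ij}$: here $\hg^{ij}(u=0) = e^{2r_0}\bg^{ij} + \th^{ij}$, so $\hg^{ij}E^{Hess}_{ij}$ contains the product $e^{2r_0}\bg^{ij}\cdot\mathcal{O}(e^{-r_0\a}) = \mathcal{O}(e^{-r_0(\a-2)})$, which is the dominant contribution and dictates the exponent $\a-2$ in the statement.

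First I would write $E^H(u=0) = \hg^{ij}E^{Hess}_{ij} + \hat{E}^{ij}\hess F(\hd_i,\hd_j) - \hat{E}^{ij}E^{Hess}_{ij}$ evaluated at $u=0$ and note that in this case $\hess F(\hd_i,\hd_j)|_{u=0} = -e^{-2r_0}\bg_{ij} + E^{Hess}_{ij}(u=0)$ from \eqref{Hess}. Then the three pieces become, respectively: $(e^{2r_0}\bg^{ij}+\th^{ij})\cdot\mathcal{O}(e^{-r_0\a})$, which is $\mathcal{O}(e^{-r_0(\a-2)})$; $\mathcal{O}(e^{-r_0\a})\cdot(-e^{-2r_0}\bg_{ij} + \mathcal{O}(e^{-r_0\a}))$, which is $\mathcal{O}(e^{-r_0(\a+2)})$, hence negligible; and $\mathcal{O}(e^{-r_0\a})\cdot\mathcal{O}(e^{-r_0\a}) = \mathcal{O}(e^{-r_0\cdot 2\a})$, also negligible. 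Summing, one obtains $|E^H(u=0)|_{C^{0,\b}(\Sigma)} \leq \mathcal{C}_2 e^{-r_0(\a-2)}$ for a suitable constant $\mathcal{C}_2$.

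The main point requiring care — though not a deep obstacle — is tracking the $C^{0,\b}(\Sigma)$ norms rather than pointwise bounds, so that the $\mathcal{O}$-notation in the Christoffel estimates must really be read as Hölder bounds $|H_{ij0}|_{C^{0,\b}(\Sigma)} \leq C e^{-r_0\a}$; this is legitimate because the asymptotically cuspidal hypothesis is stated in $C^{k,\b}$ norms, so all the error tensors $h, \d h$ and hence the $H_{\bullet\bullet\bullet}$ satisfy such bounds uniformly on the slice $\Sigma_{r_0}$. One also needs that multiplication by the fixed smooth tensor field $\bg^{ij}$ on the compact slice $(\Sigma,\bg)$ preserves $C^{0,\b}$-bounds up to a constant depending only on $(\Sigma,\bg)$, which is standard. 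With these remarks the estimate follows by collecting the three contributions above and absorbing everything into $\mathcal{C}_2$.
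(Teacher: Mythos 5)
Your proposal is correct and follows essentially the same route as the paper: the paper's own proof likewise evaluates $E^{Hess}_{ij}(u=0)=-H_{ij0}=\mathcal{O}(e^{-r_0\a})$ and $\hat{E}^{ij}(u=0)=\th^{ij}=\mathcal{O}(e^{-r_0\a})$ and then plugs into the formula \eqref{EH} for $E^H$, with the loss of $e^{-2r_0}$ coming from the factor $e^{2r_0}\bg^{ij}$ in $\hg^{ij}(u=0)$. Your version merely spells out the three-term bookkeeping and the H\"older-norm remark more explicitly than the paper does.
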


\begin{proof}
Following the equations\eqref{Hess} and \eqref{metric}, we have
	\begin{equation}
	\label{Ehess0}
	E^{Hess}_{ij}(u=0)=-H_{ij0}=\mathcal{O} (e^{-r\a}),
	\end{equation}
and	
	\begin{equation}
	\label{Emetric0}
	\hat{E}^{ij}(u=0)=\th^{ij}=\mathcal{O} (e^{-r\a}).
	\end{equation}
	
	Therefore, Proposition \ref{Hg} implies
\begin{equation*}
\begin{split}
\sqrt{1+E^F(u=0)}\,H(r_0,0,g)&=-n+E^H(u=0)=-n+\mathcal{O} (e^{-r(\a-2)}).
\end{split}
\end{equation*}
\end{proof}


On the other hand, if we consider a normal variation of the slice $\Sigma_{r_0}$, i.e.
$$S_{r_0}(tu)=\{(r_0+tu(x),x)\mid x\in\Sigma \},$$
and we denote by $H(r_0,tu,g)$ its mean curvature. Then

\begin{equation}
\label{taylor}
H(r_0,u,g)=H(r_0,0,g)+\frac{d}{dt}\Big|_{t=0}H(r_0,tu,g)+\frac{d^2}{dt^2}\Big|_{t=0}H(r_0,tu,g).
\end{equation}

\begin{Proposition}
	\label{LemaL}
	$$\sqrt{1+E^F(t=0)}\,\frac{d}{dt}\Big|_{t=0}H(r_0,tu,g)=-e^{2r_0}\lap_{\bg}\ps+E^L,$$
	where $E^L=E^L(r_0, h,\d_k h, u, \d_k u, \d^2_{ij}u)$. Moreover, there exists $R_1>0$ such that if $r_0>R_1$ there exists a constant $\mathcal{C}_3$ such that $$|E^L|_{C^{0,\b}(\Sigma)}\leq\mathcal{C}_3 e^{-r_0(\a-4)}|u|_{C^{2,\beta}(\Sigma)}.$$ 
\end{Proposition}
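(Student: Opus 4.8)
The plan is to compute $\frac{d}{dt}\big|_{t=0}H(r_0,tu,g)$ by differentiating the identity of Proposition \ref{Hg} in $t$ and then evaluating at $t=0$, controlling every error term by its exponential weight. First I would substitute $\ps \rightsquigarrow tu$ in the formula of Proposition \ref{Hg}, so that the argument of $H$ becomes $S_{r_0}(tu)$ and the height variable is $r = r_0 + tu(x)$. Differentiating the left-hand side $H(r_0,tu,g)\sqrt{1+e^{2r}t^2|\overline{\nabla}u|^2_{\bg}+E^F(tu)}$ at $t=0$: the square-root factor equals $\sqrt{1+E^F(t=0)}$ at $t=0$, and its $t$-derivative at $t=0$ contributes $H(r_0,0,g)\cdot\frac{d}{dt}\big|_{t=0}\sqrt{\cdots}$, which by Corollary \ref{H0} is $(-n+\mathcal O(e^{-r_0(\a-2)}))$ times a derivative term that I will absorb into $E^L$. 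So the only ``main'' contribution to the left side is $\sqrt{1+E^F(t=0)}\,\frac{d}{dt}\big|_{t=0}H(r_0,tu,g)$ plus controlled errors.

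On the right-hand side, the terms $e^{4r}\frac{\hess_{\bg}\ps(\overline{\nabla}\ps,\overline{\nabla}\ps)}{1+e^{2r}|\overline{\nabla}\ps|^2_{\bg}}$ and $\frac{e^{2r}|\overline{\nabla}\ps|^2_{\bg}}{1+e^{2r}|\overline{\nabla}\ps|^2_{\bg}}$ are quadratic in $\overline{\nabla}\ps$, hence quadratic in $t$ when $\ps=tu$, so their $t$-derivatives vanish at $t=0$. The $-n$ term is constant in $t$. That leaves $-e^{2r}\lap_{\bg}\ps = -te^{2(r_0+tu)}\lap_{\bg}u$, whose $t$-derivative at $t=0$ is exactly $-e^{2r_0}\lap_{\bg}u$ — this is the claimed leading term. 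The remaining piece is $\frac{d}{dt}\big|_{t=0}E^H(r_0,tu,g)$; together with the left-side square-root correction, I collect everything into $E^L=E^L(r_0,h,\d_k h,u,\d_k u,\d^2_{ij}u)$, which is what the statement asserts. The structural dependence of $E^L$ follows from tracking which variables enter $E^H$ (via \eqref{EH}, \eqref{Hess}, \eqref{metric}) and noting that differentiating in $t$ at $t=0$ either produces a factor of $u$ (from the chain rule through $r=r_0+tu$) or picks out the linear-in-$t$ part of the $\d_k(tu)$, $\d^2_{ij}(tu)$ dependence.

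The main obstacle — and the real content — is the estimate $|E^L|_{C^{0,\b}(\Sigma)}\leq \mathcal C_3 e^{-r_0(\a-4)}|u|_{C^{2,\b}(\Sigma)}$. Here I would go term by term through \eqref{EH}. The factors $H_{\ast\ast\ast}$ and their $x$-derivatives are $\mathcal O(e^{-r\a})$ or $\mathcal O(e^{-r(\a-2)})$ by the Christoffel estimates, and $\hg^{ij}=\mathcal O(e^{2r})$, $\mbox{Hess}F(\hd_i,\hd_j)=\mathcal O(e^{-2r})+\mathcal O(\d^2 u)$; multiplying these one sees the worst weight is $e^{2r}\cdot e^{-r(\a-2)}\cdot(\text{a derivative of }u)$, i.e. $e^{-r(\a-4)}$ times a norm of $u$ up to second derivatives. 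The delicate points are: (a) replacing $r=r_0+tu$ by $r_0$ costs a factor $e^{2|t||u|}$, harmless for $|t|$ small and $|u|$ bounded, but the $t$-derivative of $e^{2r}$ produces an extra $u$ — this is precisely why the estimate is linear in $|u|_{C^{2,\b}}$ and why one needs $r_0>R_1$ large enough that the ``lower order in $e^{-r}$'' remainders are genuinely small; (b) the quotient terms $e_1^{ij}$, $e_2^{ij}$ in \eqref{metric}: although they appear quadratic/quartic in $\d_k u$ and thus their naive $t$-derivative at $t=0$ vanishes, one must check the chain-rule contribution through $r$ still carries the weight $e^{-r\a}$ from $h_{00}$ and $\th$, which it does; (c) Hölder ($C^{0,\b}$) rather than sup estimates, requiring the product/quotient rules for $C^{0,\b}$ norms and the fact that $C^{2,\b}$ control of $u$ bounds $\d_k u,\d^2_{ij}u$ in $C^{0,\b}$. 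Once these weights are assembled, choosing $R_1$ so that all the $e^{-r_0\cdot(\text{positive})}$ prefactors are bounded by a fixed constant yields $\mathcal C_3$, completing the proof.
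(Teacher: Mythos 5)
Your proposal is correct and follows essentially the same route as the paper: differentiate the identity of Proposition \ref{Hg} with $\ps$ replaced by $tu$ at $t=0$, observe that the terms quadratic in $\overline{\nabla}\ps$ drop out, extract $-e^{2r_0}\lap_{\bg}u$ from the linear term, and absorb $\frac{d}{dt}\big|_{t=0}E^H$ together with the square-root correction (controlled via Corollary \ref{H0}) into $E^L$, with the worst weight $e^{-r_0(\alpha-4)}$ arising exactly as you identify from $\hg^{ij}=\mathcal{O}(e^{2r})$ multiplying the $\mathcal{O}(e^{-r(\alpha-2)})$ derivative of $E^{Hess}_{ij}$. No gaps.
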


\begin{proof}
	Using Proposition \ref{Hg}, we obtain that the mean curvature of $S_{r_0}(tu)$ satisfies
	\begin{equation}
	\label{Ht}
	H(r_0,tu,g)f_1(t)\, =\sum_{i=2}^5f_i(t),
	\end{equation}
	where 
\begin{alignat*}{3}
	&f_1(t)=\sqrt{1+e^{2r_t}t^2|\overline{\nabla}\ps|^2_{\bg}+E^F(t)},&\hspace{1em}&
	f_2(t)=-e^{2r_t}t\lap_{\bar{g}}\ps,&\hspace{1em}&
	f_3(t)=e^{4r_t}t^3\frac{\hess_{\bg}\ps(\overline{\nabla}\ps,\overline{\nabla}\ps)}{1+e^{2r_t}t^2|\overline{\nabla}\ps|^2_{\bg}},\\
	&f_4(t)=-\frac{e^{2r_t}t^2|\overline{\nabla}\ps|^2_{\bg}}{1+e^{2r_t}t^2|\overline{\nabla}\ps|^2_{\bg}},&\hspace{1em}&
	f_5(t)=-n+E^H(t),&\hspace{1em}&
	r_t=r_0+t\ps.
\end{alignat*}
	
	Taking derivatives in both sides of the equation \eqref{Ht}, we obtain
	$$f'_1(0)H(r_0,0,g)+f_1(0)\frac{d}{dt}\Big|_{t=0}H(r_0,tu,g)=\sum_{i=2}^5f'_i(0).$$
	
	Thus, it is easy to compute
	\begin{alignat*}{3}
	&f_1(0)=\sqrt{1+E^F(0)}&\hspace{2em}& f'_1(0)=\frac{1}{2\sqrt{1+E^F(0)}}\frac{d}{dt}\Big|_{t=0}E^F&\hspace{2em}&f'_2(0)=-e^{2r_0}\lap_{\bg}\ps\\ &f'_3(0)=f'_4(0)=0&\hspace{2em}&f'_5(0)=\frac{d}{dt}\Big|_{t=0}E^H
	\end{alignat*}
	Therefore, following Corollary \ref{H0} , we have
	$$\sqrt{1+E^F(0)}\frac{d}{dt}\Big|_{t=0}H(r_0,tu,g)=-e^{2r_0}\lap_{\bg}\ps+\frac{d}{dt}\Big|_{t=0}E^H+\frac{n-E^H(0)}{2(1+E^F(0))}\frac{d}{dt}\Big|_{t=0}E^F.$$
	
	Moreover, from Proposition \ref{F} we get that there exists a constant $c_1$ such that
	\begin{equation}
	\label{EF'}
	\left|\frac{d}{dt}\Big|_{t=0}E^F\right|\leq c_1e^{-r(\a-2)}|u|_{C^{2,\beta}(\Sigma)}.
	\end{equation}
	
	Analogously, from Proposition \ref{Hg} it is possible to estimate that the derivative with respect $t$ at $t=0$ of $E^H$ depends on $\{r_0, h,\d_k h, u, \d_k u, \d^2_{ij}u\}.$ Specifically, equations \eqref{Hess}, \eqref{metric} and \eqref{EH} imply 
	\begin{equation*}
	\begin{split}
	\mbox{Hess}F\left(\hd_i,\hd_j\right)(t)&=-2t^2\d_i \ps\d_j \ps-e^{-2r_t}\bg_{ij}-t\d^2_{ij} \ps+t\bar{\Gamma}_{ij}^k\d_k \ps+E^{Hess}_{ij}(t),\\
	\hg^{ij}(t)&=e^{2r_t}\bg^{ij}-\frac{e^{4r_t}t^2(\bg^{ik}\d_k u)(\bg^{jl}\d_l u)}{1+e^{2r_t}t^2|\overline{\nabla}u|^2_{\bg}}+\hat{E}^{ij}(t),\\
	{E^H}(t)&=\hg^{ij}(t)E^{Hess}_{ij}(t)+\hat{E}^{ij}(t)\mbox{Hess} F(\hd_i,\hd_j)(t)-\hat{E}^{ij}(t)E^{Hess}_{ij}(t).
	\end{split}
	\end{equation*}
	
	Thus, using \eqref{Ehess0} and \eqref{Emetric0} we get
	\begin{equation*}
	\begin{split}
	\mbox{Hess}F\left(\hd_i,\hd_j\right)(0)&=-e^{-2r_0}\bg_{ij}+E^{Hess}_{ij}(0)=-e^{-2r_0}\bg_{ij}-H_{ij0}\\
	\hg^{ij}(0)&=e^{2r_0}\bg^{ij}+\hat{E}^{ij}(0)=e^{2r_0}\bg^{ij}+\th^{ij},
	\end{split}
\end{equation*}
	and 
	\begin{equation*}
	\begin{split}
	\frac{d}{dt}\Big|_{t=0}\mbox{Hess}F\left(\hd_i,\hd_j\right)&=2ue^{-2r_0}\bg_{ij}-\d^2_{ij} \ps+\bar{\Gamma}_{ij}^k\d_k \ps+\frac{d}{dt}\Big|_{t=0}{E^{Hess}_{ij}}\\
	\frac{d}{dt}\Big|_{t=0}\hg^{ij\,}&=2ue^{2r_0}\bg^{ij}+\frac{d}{dt}\Big|_{t=0}\hat{E}^{ij\,},
	\end{split}
\end{equation*}
where there exist constants $c_2, \, c_3$ such that
$$\left|\frac{d}{dt}\Big|_{t=0}E^{Hess}_{ij}\right|\leq c_2 e^{-r_0(\a-2)}|u|_{C^{2,\beta}(\Sigma)}, \quad \left|\frac{d}{dt}\Big|_{t=0}\hat{E}^{ij\,}\right|\leq c_3 e^{-r_0\a}|u|_{C^{2,\beta}(\Sigma)}.$$
	
	Therefore,
	\begin{equation*}
	\begin{split}
	\frac{d}{dt}\Big|_{t=0}{E^H}&=e^{2r_0}\bg^{ij}\frac{d}{dt}\Big|_{t=0}{E^{Hess}_{ij}}-2ue^{2r_0}\bg^{ij}H_{ij0}+\th^{ij}(2ue^{-2r_0}\bg_{ij}-\d^2_{ij} \ps+\bar{\Gamma}_{ij}^k\d_k \ps)\\
	&\;-e^{-2r_0}\bg_{ij}\frac{d}{dt}\Big|_{t=0}\hat{E}^{ij\,}+\th^{ij}\frac{d}{dt}\Big|_{t=0}{E^{Hess}_{ij}}-H_{ij0}\frac{d}{dt}\Big|_{t=0}\hat{E}^{ij\,},
	\end{split}
	\end{equation*}
	and then, there exists a constant $c_4$ such that 
	\begin{equation}
	\label{EH'}
	\left|\frac{d}{dt}\Big|_{t=0}{E^H}\right|\leq c_4 e^{-r_0(\a-4)}|u|_{C^{2,\beta}(\Sigma)}.
	\end{equation}
	
	Finally, we get
		$$\sqrt{1+E^F(0)}\frac{d}{dt}\Big|_{t=0}H(r_0,tu,g)=-e^{2r_0}\lap_{\bg}\ps+\underbrace{\frac{d}{dt}\Big|_{t=0}{E^H}+\frac{n-E^H(0)}{2(1+E^F(0))}\frac{d}{dt}\Big|_{t=0}E^F}_{E^L},$$
and using \eqref{EF'} and \eqref{EH'} we get that there exists $R_1>0$ such that if $r_0>R_1$ there exists a constant $\mathcal{C}_3$ such that $|E^L|_{C^{0,\b}(\Sigma)}\leq\mathcal{C}_3 e^{-r_0(\a-4)}|u|_{C^{2,\beta}(\Sigma)}.$
\end{proof}

Now, we need to understand the structure of the quadratic part of the mean curvature operator

\begin{Proposition}
	\label{LemaQ}
	$$\sqrt{1+E^F(t=0)}\,\frac{d^2}{dt^2}\Big|_{t=0}H(r_0,tu,g)=-4e^{2r_0}u\lap_{\bg}\ps-2e^{2r_0}|\overline{\nabla}\ps|^2_{\bg}+\frac{ne^{2r_0}|\overline{\nabla}\ps|^2_{\bg}}{(1+E^F(t=0))}+E^Q,$$
	where $E^Q=E^Q(r_0, h,\d_k h, u,u^2, \d_k u, u\d_k u, (\d_k u)^2,u\d^2_{ij}u,\d_{k}u\d^2_{ij}u)$. Moreover, there exists $R_2>0$ such that if $r_0>R_2$, there exists a constant $\mathcal{C}_4$ such that $$|E^Q|_{C^{0,\b}(\Sigma)}\leq\mathcal{C}_4 e^{-r_0(\a-4)}|u|^2_{C^{2,\beta}(\Sigma)}.$$
\end{Proposition}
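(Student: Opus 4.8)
\noindent The plan is to differentiate identity \eqref{Ht}, $H(r_0,t\ps,g)\,f_1(t)=\sum_{i=2}^{5}f_i(t)$, \emph{twice} in $t$ and evaluate at $t=0$. Leibniz' rule gives
\[
f_1(0)\,\frac{d^2}{dt^2}\Big|_{t=0}H(r_0,t\ps,g)=\sum_{i=2}^{5}f_i''(0)-2f_1'(0)\,\frac{d}{dt}\Big|_{t=0}H(r_0,t\ps,g)-f_1''(0)\,H(r_0,0,g),
\]
and, since $f_1(0)=\sqrt{1+E^F(0)}$, the whole problem reduces to computing $f_1''(0)$ and $f_2''(0),\dots,f_5''(0)$ and then substituting the two quantities already at our disposal, namely $H(r_0,0,g)=(1+E^F(0))^{-1/2}(-n+E^H(0))$ from Corollary \ref{H0} and $\frac{d}{dt}\big|_{t=0}H(r_0,t\ps,g)=(1+E^F(0))^{-1/2}(-e^{2r_0}\lap_{\bg}\ps+E^L)$ from Proposition \ref{LemaL}.

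\noindent First I would carry out the elementary second derivatives, recalling $r_t=r_0+t\ps$. Since $f_3$ carries a factor $t^3$ we get $f_3''(0)=0$; from $f_2(t)=-e^{2r_t}t\lap_{\bg}\ps$ one finds $f_2''(0)=-4\ps\,e^{2r_0}\lap_{\bg}\ps$; writing $a(t)=e^{2r_t}t^2|\overline{\nabla}\ps|^2_{\bg}$ (so that $f_1^2=1+a(t)+E^F(t)$, $f_4=-a/(1+a)$, $a(0)=a'(0)=0$ and $a''(0)=2e^{2r_0}|\overline{\nabla}\ps|^2_{\bg}$) one obtains $f_4''(0)=-2e^{2r_0}|\overline{\nabla}\ps|^2_{\bg}$ and
\[
f_1''(0)=\frac{e^{2r_0}|\overline{\nabla}\ps|^2_{\bg}+\frac12\frac{d^2}{dt^2}\big|_{t=0}E^F}{\sqrt{1+E^F(0)}}-\frac{\big(\frac{d}{dt}\big|_{t=0}E^F\big)^2}{4\,(1+E^F(0))^{3/2}},
\]
while $f_5''(0)=\frac{d^2}{dt^2}\big|_{t=0}E^H$. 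Substituting into the displayed identity and keeping only the leading contribution $(1+E^F(0))^{-1/2}e^{2r_0}|\overline{\nabla}\ps|^2_{\bg}$ of $f_1''(0)$ paired against the $-n$ inside $H(r_0,0,g)$, the terms $f_2''(0)$, $f_4''(0)$ and $n\,e^{2r_0}|\overline{\nabla}\ps|^2_{\bg}/(1+E^F(0))$ assemble exactly into the claimed principal part of $\sqrt{1+E^F(0)}\,\frac{d^2}{dt^2}\big|_{t=0}H(r_0,t\ps,g)$. By definition $E^Q$ is then the explicitly computable sum of everything that is left over: $\frac{d^2}{dt^2}\big|_{t=0}E^H$, the remainder $-f_1''(0)E^H(0)/\sqrt{1+E^F(0)}$, the pieces of $-f_1''(0)H(r_0,0,g)$ carrying $\frac{d^2}{dt^2}E^F(0)$ and $(\frac{d}{dt}E^F(0))^2$, and $-2f_1'(0)(-e^{2r_0}\lap_{\bg}\ps+E^L)$. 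Its functional dependence on $(r_0,h,\d_k h,\ps,\ps^2,\d_k\ps,\ps\d_k\ps,(\d_k\ps)^2,\ps\d^2_{ij}\ps,\d_k\ps\,\d^2_{ij}\ps)$ is read off term by term from the representations \eqref{Hess}, \eqref{metric}, \eqref{EH} and from Proposition \ref{F}.

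\noindent The remaining work is the estimate of $E^Q$, and it proceeds exactly in the spirit of Proposition \ref{LemaL}, one order higher. Differentiating the representations of $\hess F(\hd_i,\hd_j)(t)$, $\hg^{ij}(t)$ and \eqref{EH} twice and evaluating at $t=0$ with \eqref{Ehess0}, \eqref{Emetric0}, the bounds $|\frac{d}{dt}|_{t=0}E^{Hess}_{ij}|\le c_2 e^{-r_0(\a-2)}|\ps|_{C^{2,\b}}$, $|\frac{d}{dt}|_{t=0}\hat{E}^{ij}|\le c_3 e^{-r_0\a}|\ps|_{C^{2,\b}}$ already recorded, and their second-order analogues $|\frac{d^2}{dt^2}|_{t=0}E^{Hess}_{ij}|\le C e^{-r_0(\a-2)}|\ps|^2_{C^{2,\b}}$, $|\frac{d^2}{dt^2}|_{t=0}\hat{E}^{ij}|\le C e^{-r_0(\a-4)}|\ps|^2_{C^{2,\b}}$, one gets $|\frac{d^2}{dt^2}|_{t=0}E^H|_{C^{0,\b}}\le C e^{-r_0(\a-4)}|\ps|^2_{C^{2,\b}}$; here, as before, the only loss is the factor $e^{4r_0}$ produced when the leading term $e^{2r_0}\bg^{ij}$ of $\hg^{ij}$ multiplies errors of size $e^{-r_0\a}$ or their derivatives. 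Likewise $|\frac{d^2}{dt^2}|_{t=0}E^F|\le C e^{-r_0(\a-2)}|\ps|^2_{C^{2,\b}}$ and, by \eqref{EF'}, $|\frac{d}{dt}|_{t=0}E^F|\le c_1 e^{-r_0(\a-2)}|\ps|_{C^{2,\b}}$, so that the terms of $E^Q$ built from these (after multiplication by the bounded factor $(-n+E^H(0))/(1+E^F(0))$) are $O(e^{-r_0(\a-2)})|\ps|^2_{C^{2,\b}}$; the remainder $e^{2r_0}|\overline{\nabla}\ps|^2_{\bg}E^H(0)/(1+E^F(0))$ is $O(e^{-r_0(\a-2)})|\ps|^2_{C^{2,\b}}$ by Corollary \ref{H0}; and $f_1'(0)(-e^{2r_0}\lap_{\bg}\ps+E^L)$ is $O(e^{-r_0(\a-4)})|\ps|^2_{C^{2,\b}}$ by \eqref{EF'} and Proposition \ref{LemaL}. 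Choosing $R_2\ge R_1$ large enough that $1+E^F(0)$ and its reciprocal stay bounded uniformly on $\Sigma$ (by Proposition \ref{F}, $|E^F(0)|\le\mathcal{C}_1 e^{-r_0\a}$) and summing, this yields $|E^Q|_{C^{0,\b}(\Sigma)}\le\mathcal{C}_4 e^{-r_0(\a-4)}|\ps|^2_{C^{2,\b}(\Sigma)}$.

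\noindent The step I expect to be the main obstacle is precisely the bound on $\frac{d^2}{dt^2}\big|_{t=0}E^H$: one has to check, through the two products $\hg^{ij}E^{Hess}_{ij}$ and $\hat{E}^{ij}\hess F(\hd_i,\hd_j)$ and the cross term in \eqref{EH}, that no differentiation in $t$ combined with the $e^{2r_0}$-sized factors coming from $\hg^{ij}$ (or with the $e^{-2r_t}\bg_{ij}$ hidden in $\hess F$) ever degrades the decay below $e^{-r_0(\a-4)}$, i.e. that the worst interaction appearing in the \emph{second} variation is still the same one already met in \eqref{EH'} for the first variation. Everything else is bookkeeping of lower-order terms.
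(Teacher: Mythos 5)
Your proposal follows the same route as the paper: differentiate the identity $H(r_0,tu,g)f_1(t)=\sum_{i=2}^5 f_i(t)$ twice at $t=0$, compute $f_1''(0),\dots,f_5''(0)$, extract the principal terms (with the $n e^{2r_0}|\overline{\nabla}\ps|^2_{\bg}/(1+E^F(0))$ piece coming from $-f_1''(0)H(r_0,0,g)$), define $E^Q$ as the explicit remainder, and bound it via the second-order analogues of the estimates in Proposition \ref{LemaL}. This is correct (your expression for $f_1''(0)$ is in fact the arithmetically accurate one, and the discrepancy only affects terms absorbed into $E^Q$), so nothing further is needed.
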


\begin{proof}
Analogously to Proposition \ref{LemaL}, with the same notations, we take second derivatives in both sides of the equation \eqref{Ht} 
\begin{equation}
\label{taylor2}
f''_1(0)H(r_0,0,g)+2f'_1(0)\frac{d}{dt}\Big|_{t=0}H(r_0,tu,g)+f_1(0)\frac{d^2}{dt^2}\Big|_{t=0}H(r_0,tu,g)=\sum_{i=2}^5f''_i(0).
\end{equation}

Thus, it is easy to compute
$$f_1(0)=\sqrt{1+E^F(0)},\quad f'_1(0)=\frac{{E^F}'(0)}{2f_1(0)} ,\quad  f''_1(0)=\frac{e^{2r_0}|\overline{\nabla}\ps|^2_{\bg}+{E^F}''(0)}{f_1(0)}-\frac{{E^F}'(0)^2}{2f_1(0)^3},$$
$$f''_2(0)=-4e^{2r_0}u\lap_{\bg}\ps, \quad f''_3(0)=0,\quad f''_4(0)=-2e^{2r_0}|\overline{\nabla}\ps|^2_{\bg},\quad f''_5(0)={E^H}''(0).$$

The formula of this proposition comes from replacing these previous equations in \eqref{taylor2}, where $E^Q$ is given by
	\begin{equation*}
	\begin{split}
	E^Q&=\frac{d^2}{dt^2}\Big|_{t=0}{E^H}-\frac{E^L}{f_1^2(0)}\frac{d}{dt}\Big|_{t=0}E^F+\frac{e^{2r_0}}{f_1^2(0)}\left(\frac{d}{dt}\Big|_{t=0}{E^F}\lap_{\bg}\ps-{E^H}(0)|\overline{\nabla}\ps|^2_{\bg}\right)\\
	&\;\;+\left(\frac{d^2}{dt^2}\Big|_{t=0}{E^F}-\frac{1}{2f_1^2(0)}\left(\frac{d}{dt}\Big|_{t=0}E^F\right)^2\right)\left(\frac{n-{E^H}(0)}{f_1^2(0)}\right)
	\end{split}
	\end{equation*}

Moreover, a straightforward computations similar to Proposition \ref{LemaL} shows that there exist constants $\bc_1, \bc_2$ such that 
\begin{equation}
\label{EH''}
\left|\frac{d^2}{dt^2}\Big|_{t=0}{E^F}\right|\leq \bc_1 e^{-r_0(\a-4)}|u|^2_{C^{2,\beta}(\Sigma)},\quad \left|\frac{d^2}{dt^2}\Big|_{t=0}{E^H}\right|\leq \bc_2 e^{-r_0(\a-4)}|u|^2_{C^{2,\beta}(\Sigma)},
\end{equation}
and then, estimates on $E^Q$ follows.
\end{proof}

Summarizing, given $u\in C^{2,\b}(\Sigma)$, the mean curvature of the graph of $u$ over the slice $\Sigma_{r}$, satisfies
\begin{equation}
\begin{split}
\label{ProperH}
	\sqrt{1+E^F(0)}H(r,0,g)&=-n+E^H(0)\\
	\sqrt{1+E^F(0)}\frac{d}{dt}\Big|_{t=0}H(r,tu,g)&=-e^{2r}\lap_{\bg}\ps+E^L\\
	\sqrt{1+E^F(0)}\frac{d^2}{dt^2}\Big|_{t=0}H(r,tu,g)&=-4e^{2r}u\lap_{\bg}\ps-2e^{2r}|\overline{\nabla}\ps|^2_{\bg}+\frac{ne^{2r}|\overline{\nabla}\ps|^2_{\bg}}{(1+E^F(0))}+E^Q,
	\end{split}
\end{equation}
and there exist constants $\mathcal{C}_1, \mathcal{C}_2,$ such that 
\begin{equation}
\begin{split}
\label{Proper2H}
|E^F(0)|_{C^{0,\b}(\Sigma)}\leq\mathcal{C}_1 e^{-r\a},&\quad |E^H(0)|_{C^{0,\b}(\Sigma)}\leq\mathcal{C}_2 e^{-r(\a-2)},
\end{split}
\end{equation}
and there exists $R'>0$ such that if $r>R'$ then there exists constants $ \mathcal{C}_3,$ $\mathcal{C}_4$ such that

\begin{equation}
\label{Proper3H}
|E^L|_{C^{0,\b}(\Sigma)}\leq\mathcal{C}_3 e^{-r(\a-4)}|u|_{C^{2,\b}(\Sigma)},\quad |E^Q|_{C^{0,\b}(\Sigma)}\leq\mathcal{C}_4 e^{-r(\a-4)}|u|^2_{C^{2,\b}(\Sigma)}.
\end{equation}

\section{Existence of CMC-surfaces}\label{existenceS}
In this section we will prove that, given an end of a $C^{2,\b}$-asymptotically cuspidal manifold $(M,g)$ of order $\a>4$ and given $r$ big enough, there exists a function $u\in C^{2,\beta}(\Sigma)$, depending of $r$, such that the graph of $u$ over the slice $\Sigma_{r}$ has constant mean curvature. To do this, we will use the Taylor's equation for the mean curvature to establish a non-linear partial differential equation and then, to find a solution using the iterative scheme.  

Let $(N,g)$ be an end of a $C^{2,\b}$-asymptotically cuspidal manifold $(M,g)$ of order $\a>4.$ The goal in this section will be to find a function $u(r)\in C^{2,\beta}(\Sigma)$ such that the mean curvature of $S_r(u(r))$ with respect to the metric $g$, $H(r,u(r),g)$, is constant for big $r.$

Following Taylor's equation \eqref{taylor} and \eqref{ProperH} for any positive function $u$, we have that the mean curvature of $S_r(u)$ satisfies
$$\sqrt{1+E^F(0)}H(r,u,g)=-n+E^H(0)-e^{2r}\lap_{\bg}\ps+E^L+e^{2r}Q(u)+E^Q,$$
where the quadratic part $Q(u):=-4u\lap_{\bg}\ps-2|\overline{\nabla}\ps|^2_{\bg}+\frac{n|\overline{\nabla}\ps|^2_{\bg}}{(1+E^F(0))}.$ 

To solve the equation $H(r,u,g)=-n+\de$, for some constant $\de$, is equivalent to solve the non-linear partial differential equation
\begin{equation}
	\label{EQ}
	\begin{split}
	\lap_{\bg}\ps\!&=\!e^{-2r}\!\!\left[n(\sqrt{1+E^F(0)}-1)+E^H(0)-\de \sqrt{1+E^F(0)}\right]\!\!+\!e^{-2r}E^L(u)\!+\!Q(u)\!+\!e^{-2r}E^Q(u).
	\end{split}
\end{equation}

Since $\lap_{\bg}:C^{2,\beta}(\Sigma) \to C^{0,\beta}(\Sigma)$ defines an elliptic and continuous linear differential operator, the kernel $Ker(\lap_{\bg})\subset C^{2,\beta}(\Sigma)$ is closed. Moreover, compactness of $\Sigma$ implies that $C^{2,\beta}(\Sigma) \hookrightarrow L^2(\Sigma)$ and $Ker(\lap_{\bg})$ consists on constant functions. 

Therefore, the projection on the kernel exists, is continuous and for every $u\in C^{2,\beta}(\Sigma)$ it holds
$$u=u^{\perp}+\frac{1}{|\Sigma|}\int_{\Sigma} u.$$

The idea to find a solution of \eqref{EQ}, is to prove that the sequence $\{u_j^\perp\} $ in $C^{2,\beta}(\Sigma)$ defined by the iterative method
\begin{equation}
\label{iterative}
\begin{split}
u_0^\perp&=0\\
\lap_{\bg}\ps_{j+1}^\perp&=e^{-2r}\left[n(\sqrt{1+E^F(0)}-1)+E^H(0)-\de_j \sqrt{1+E^F(0)}+E^L(u_j^\perp)+e^{2r}Q(u_j^\perp)+E^Q(u_j^\perp)\right]
\end{split}
\end{equation}
with $\{\de_j\}$ given by
\begin{equation}
\label{delta}
\begin{split}
\de_0:=&\left(\int_{\Sigma}\sqrt{1+E^F(0)}\right)^{-1}\left(\int_{\Sigma}n(\sqrt{1+E^F(0)}-1)+E^H(0)\right),\\
\de_j:=& \,\de_0+\left(\int_{\Sigma}\sqrt{1+E^F(0)}\right)^{-1}\left(\int_{\Sigma}E^L(u_j^\perp)+e^{2r}Q(u_j^\perp)+E^Q(u_j^\perp)\right),
\end{split}
\end{equation}
converges.

\begin{Remark}
	Note that the definition of $\{\de_j\}$ is to ensure that the iterative scheme is well-defined, in the sense that $\lap_{\bg}\ps_{j+1}^\perp$ in $C^{0,\beta}(\Sigma)$ to be zero mean value functions.
\end{Remark}

To show convergence of the iterative scheme describe in \eqref{iterative}, it is necessary two proper estimates for the inverse of $\lap_{\bg}$ and for the quadratic part $Q.$ The proof of the following is directly adapted from an analogous result in \cite{L}.

\begin{Lemma}
	\label{BoundL}
	Let $\lap_{\bg}:C^{2,\b}(\Sigma)\to C^{0,\b}(\Sigma)$ be the Laplace-Beltrami operator.	If $u\in C^{2,\b}(\Sigma)$ and $\lap_{\bg}u=f$, then there exists a constant $C_L>0$ such that
	$$\left|u-\frac{1}{|\Sigma|}\int_{\Sigma} u\right|_{C^{2,\b}(\Sigma)}\leq C_L|f|_{C^{0,\b}(\Sigma)},$$
	where $C_L$ is a constant independent of $u$ and $f$.
\end{Lemma}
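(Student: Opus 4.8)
The plan is to prove the Schauder-type estimate for $\lap_{\bg}$ on the compact manifold $(\Sigma,\bg)$ by combining the standard interior-elliptic (global) Schauder estimate for a closed manifold with a contradiction/compactness argument to remove the lower-order term. First I would recall that on a compact manifold without boundary the $L^2$-orthogonal complement of the constants, call it $C^{2,\b}_0(\Sigma):=\{v\in C^{2,\b}(\Sigma):\int_\Sigma v=0\}$, is preserved by the construction $v=u-\frac{1}{|\Sigma|}\int_\Sigma u$, and that $\lap_{\bg}v=\lap_{\bg}u=f$ since constants are in the kernel. Thus it suffices to show
$$|v|_{C^{2,\b}(\Sigma)}\leq C_L\,|\lap_{\bg}v|_{C^{0,\b}(\Sigma)}\qquad\text{for all }v\in C^{2,\b}_0(\Sigma).$$

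The main step is this inequality, and I would obtain it in two moves. The first is the classical global Schauder estimate on a closed manifold (cover $\Sigma$ by finitely many coordinate charts, apply the interior estimate in each, and patch with a partition of unity): there is a constant $C>0$, depending only on $(\Sigma,\bg)$, such that
$$|v|_{C^{2,\b}(\Sigma)}\leq C\left(|\lap_{\bg}v|_{C^{0,\b}(\Sigma)}+|v|_{C^{0}(\Sigma)}\right).$$
The second move is to absorb the $|v|_{C^0(\Sigma)}$ term using $\int_\Sigma v=0$. Here I would argue by contradiction: if no uniform bound $|v|_{C^0(\Sigma)}\leq C'|\lap_{\bg}v|_{C^{0,\b}(\Sigma)}$ held on $C^{2,\b}_0(\Sigma)$, take a sequence $v_m$ with $|v_m|_{C^0}=1$ (normalize) and $|\lap_{\bg}v_m|_{C^{0,\b}}\to 0$. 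By the Schauder estimate the $v_m$ are bounded in $C^{2,\b}$, hence (Arzelà–Ascoli, $C^{2,\b}\hookrightarrow C^2$ compactly on the compact $\Sigma$) a subsequence converges in $C^2$ to some $v_\infty$ with $\lap_{\bg}v_\infty=0$, $\int_\Sigma v_\infty=0$, and $|v_\infty|_{C^0}=1$. But $\lap_{\bg}v_\infty=0$ forces $v_\infty$ to be constant (maximum principle, or integration by parts: $\int_\Sigma|\grad v_\infty|^2=-\int_\Sigma v_\infty\lap_{\bg}v_\infty=0$), and then $\int_\Sigma v_\infty=0$ forces $v_\infty\equiv 0$, contradicting $|v_\infty|_{C^0}=1$. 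Feeding this bound on $|v|_{C^0}$ back into the global Schauder estimate gives $|v|_{C^{2,\b}(\Sigma)}\leq C_L|\lap_{\bg}v|_{C^{0,\b}(\Sigma)}$ with $C_L=C(1+C')$, independent of $v$ and $f$.

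The only genuine obstacle is the compactness/contradiction step, i.e. making sure the kernel of $\lap_{\bg}$ restricted to zero-mean functions is trivial and that the elliptic estimate upgrades weak convergence to the convergence needed to pass to the limit; both are standard on a closed manifold, and the excerpt has already recorded that $Ker(\lap_{\bg})$ consists of constants and that $C^{2,\b}(\Sigma)\hookrightarrow L^2(\Sigma)$. Everything else (the global Schauder estimate, the patching with a partition of unity, Arzelà–Ascoli) is routine, and, as the statement notes, the argument is the one used in \cite{L}, so I would simply cite that source for the details and keep the proof short.
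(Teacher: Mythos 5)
Your proof is correct and follows essentially the same route as the paper's: a global Schauder estimate on the closed manifold combined with a compactness/contradiction argument using Arzel\`a--Ascoli and the fact that zero-mean harmonic functions vanish. The only cosmetic difference is that you normalize the $C^0$ norm of the contradiction sequence and absorb the lower-order term separately, whereas the paper normalizes the $C^{2,\b}$ norm and invokes the Schauder estimate at the end; both are standard and equivalent.
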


\begin{proof}
	Suppose by contradiction that there exist sequence $\{u_j\}$ such that
	$$ \lap_{\bg} u_j=f_j,\quad \left|u_j-\frac{1}{|\Sigma|}\int_{\Sigma} u_j\right|_{C^{2,\b}(\Sigma)}=1,\quad |f_j|_{C^{0,\b}(\Sigma)}\to 0.$$
	
	Then, up to subsequence, $\left\{u_j-\frac{1}{|\Sigma|}\int_{\Sigma} u_j\right\}$ converges to $v$ in $C^2(\Sigma).$ So, taking the point-wise limit we get
	$$\lim_{j\to \infty}\lap_{\bg}\left(u_j-\frac{1}{|\Sigma|}\int_{\Sigma} u_j\right)=\lap_{\bg}v.$$
	And also, we have
	\begin{equation*}
	\lim_{j\to \infty}\lap_{\bg}\left(u_j-\frac{1}{|\Sigma|}\int_{\Sigma} u_j\right)=\lim_{j\to\infty} f_j=0.
	\end{equation*}
	Therefore, $v\in Ker(\lap_{\bg})$, i.e. $v$ is a constant function. Moreover, using that $\left|u_j-\frac{1}{|\Sigma|}\int_{\Sigma} u_j\right|_{C^{2,\b}(\Sigma)}\!=1$ and compactness of $\Sigma$, Dominated convergence Theorem implies
	
	\begin{equation*}
	v=\frac{1}{|\Sigma|}\int_{\Sigma}v=\frac{1}{|\Sigma|}\int_{\Sigma}\left(\lim_{j\to\infty}\left(u_j-\frac{1}{|\Sigma|}\int_{\Sigma}u_j\right)\right)=\frac{1}{|\Sigma|}\lim_{j\to\infty}\left(\int_{\Sigma}\left(u_j-\frac{1}{|\Sigma|}\int_{\Sigma}u_j\right)\right)=0.
	\end{equation*}
	Therefore, up to a subsequence, $\left\{u_j-\frac{1}{|\Sigma|}\int_{\Sigma} u_j\right\}$ converges to $0$ in $C^2(\Sigma).$
	
	On the other hand,  interior Schauder estimates for $\lap_{\bg}$ implies that there exists a constant $C>0$, depending on $n, \b$, such that
	$$|u|_{C^{2,\b}(\Omega_k)}\leq C(n,\b)\left(|u|_{C^{0}(\Omega'_k)}+|\lap_{\bg}u|_{C^{0,\b}(\Omega'_k)}\right),$$
	where $\{\Omega_k\subset \Omega'_k\}_k$ is a finite covering of concentric open balls of $\Sigma.$	Therefore,
	\begin{equation*}
	\begin{split}
	\left|u_j-\frac{1}{|\Sigma|}\int_{\Sigma} u_j\right|_{C^{2,\b}(\Omega_k)}
	&\leq C(n,\b)\left(\left|u_j-\frac{1}{|\Sigma|}\int_{\Sigma} u_j\right|_{C^{0}(\Omega'_k)}+|\lap_{\bg}u_j|_{C^{0,\b}(\Omega'_k)}\right).
	\end{split}
	\end{equation*}
	
	Taking the supremum over the covering, we get
	\begin{equation*}
	\begin{split}
	1=\sup_{\{\Omega_k\}}\left|u_j-\frac{1}{|\Sigma|}\int_{\Sigma} u_j\right|_{C^{2,\b}(\Omega_k)}&\leq C(n,\b)\left(\left|u_j-\frac{1}{|\Sigma|}\int_{\Sigma} u_j\right|_{C^{0}(\Sigma)}+|f_j|_{C^{0,\b}(\Sigma)}\right)
	\end{split}
	\end{equation*}
	but, since all the terms on the right side tend to zero as $j\to\infty$, this is a contradiction.
\end{proof}

We can now prove the main result of this section which proves Theorem \ref{Main}(i)(ii).
	
\begin{Theorem}
	\label{ThCMCsurface}
Let $(N,g)$ be an end of a $C^{2,\b}$-asymptotically cuspidal manifold $(M,g)$ of order $\a>4$. Then, there exists $R>0$ such that for every $r>R$ there exists a function $u(r)\in C^{2,\b}(\Sigma)$ with zero mean value such that
$$S_r(u(r))=\{(r+u(r)(x))\mid x\in \Sigma\}\subset N$$
has constant mean curvature, equal to $(-n+\de)$, for some constant $\de$.

Moreover, there exist constants $C(R)>0$ and $c(R)>0$ such that $$|u(r)|_{C^{2,\b}(\Sigma)}\leq Ce^{-r(\a-2)},\quad |\de|\leq ce^{-r(\a-4)} .$$
\end{Theorem}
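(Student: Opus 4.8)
The plan is to solve the nonlinear equation \eqref{EQ} for the zero‑mean component $u^{\perp}$ by showing that the iterative scheme \eqref{iterative}--\eqref{delta} converges in the Banach space $C^{2,\b}(\Sigma)$; its limit $u(r)$ and the constant $\de:=\lim_j\de_j$ will then be the ones in the statement. First I would record that the scheme is well posed: on the closed manifold $(\Sigma,\bg)$ the equation $\lap_{\bg}w=f$ has a (unique) zero‑mean solution exactly when $\int_\Sigma f=0$, and the definition of $\de_j$ in \eqref{delta} is arranged precisely so that the bracket on the right of \eqref{iterative} integrates to $0$. Hence each $u^{\perp}_{j+1}$ exists and is zero‑mean, so Lemma \ref{BoundL} applies to it directly and gives $|u^{\perp}_{j+1}|_{C^{2,\b}(\Sigma)}\le C_L\,|f_j|_{C^{0,\b}(\Sigma)}$, where $f_j$ denotes that right‑hand side. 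Everything is thus reduced to bounding $|f_j|_{C^{0,\b}(\Sigma)}$ and the increments $|f_j-f_{j-1}|_{C^{0,\b}(\Sigma)}$.

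The core is a uniform a priori estimate: there exist $K,c>0$ and $R>0$, all independent of the iteration index, so that $|u^{\perp}_j|_{C^{2,\b}(\Sigma)}\le K e^{-r(\a-2)}$ for every $r>R$ and every $j$ (and consequently $|\de_j|\le c\,e^{-r(\a-4)}$). I would prove this by induction on $j$, the case $j=0$ being trivial. Granting the bound for $j$, one estimates the ingredients of $f_j$ and $\de_j$ via \eqref{Proper2H}--\eqref{Proper3H}: using $|\sqrt{1+E^F(0)}-1|\le 2|E^F(0)|\lesssim e^{-r\a}$, $|E^H(0)|\lesssim e^{-r(\a-2)}$ and the inductive hypothesis, the ``source'' part $e^{-2r}\big[n(\sqrt{1+E^F(0)}-1)+E^H(0)-\de_j\sqrt{1+E^F(0)}\big]$ of $f_j$ is $\mathcal{O}(e^{-r\a})$, while $e^{-2r}E^L(u^{\perp}_j)$ and $Q(u^{\perp}_j)$ are $\mathcal{O}(e^{-r(2\a-4)})$ and $e^{-2r}E^Q(u^{\perp}_j)$ is even smaller; in \eqref{delta} the corrections obey $e^{2r}\big|\int_\Sigma Q(u^{\perp}_j)\big|\lesssim e^{-r(2\a-6)}$ and $\big|\int_\Sigma E^L(u^{\perp}_j)\big|,\big|\int_\Sigma E^Q(u^{\perp}_j)\big|\lesssim e^{-r(2\a-6)}$. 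Since $\a>4$ we have $2\a-4>\a$ and $2\a-6>\a-2$, so for $r$ past a threshold (which may depend on $K$) these $u^{\perp}_j$‑dependent terms are dominated by the main ones; the crucial point is that the main terms' coefficients are built only from $\mathcal{C}_1,\dots,\mathcal{C}_4,|\Sigma|$ and do not involve $K$, so $|\de_j|\le c\,e^{-r(\a-4)}$ and $|f_j|_{C^{0,\b}(\Sigma)}\le C_0'e^{-r\a}$ with $C_0'$ independent of $K$, whence the choice $K:=C_LC_0'$ closes the induction ($C_L|f_j|_{C^{0,\b}}\le C_0'C_L e^{-r\a}\le K e^{-r(\a-2)}$). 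I expect this bookkeeping — ensuring that neither the constants nor the radius $R$ depend on $j$ — to be the main obstacle, and it is exactly here that $\a>4$ enters: it forces the quadratic term $Q$ (which enters $\de_j$ with the lossy prefactor $e^{2r}$) and the error terms $E^L,E^Q$ (carrying the decay $e^{-r(\a-4)}$) to be strictly subordinate to the invertible operator $\lap_{\bg}$.

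Finally I would upgrade this to a contraction and pass to the limit. Subtracting two consecutive lines of \eqref{iterative} gives $\lap_{\bg}(u^{\perp}_{j+1}-u^{\perp}_j)=f_j-f_{j-1}$, and since $E^L$ is linear and $Q$ quadratic in its argument while $E^Q$ and $\de_j$ depend on $u^{\perp}_j$ only through such expressions, the Lipschitz analogues of \eqref{Proper3H} together with the uniform bound just obtained yield
$$|f_j-f_{j-1}|_{C^{0,\b}(\Sigma)}\le C(1+K)\,e^{-r(\a-2)}\,|u^{\perp}_j-u^{\perp}_{j-1}|_{C^{2,\b}(\Sigma)}.$$
By Lemma \ref{BoundL}, $|u^{\perp}_{j+1}-u^{\perp}_j|_{C^{2,\b}(\Sigma)}\le\theta\,|u^{\perp}_j-u^{\perp}_{j-1}|_{C^{2,\b}(\Sigma)}$ with $\theta:=C_LC(1+K)e^{-r(\a-2)}<1$ after a final enlargement of $R$ (using $\a>4>2$). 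Hence $\{u^{\perp}_j\}$ is Cauchy and converges to some $u(r)\in C^{2,\b}(\Sigma)$. Letting $j\to\infty$ in \eqref{iterative}--\eqref{delta} — legitimate because the maps $u\mapsto E^L(u),Q(u),E^Q(u)$ are continuous from $C^{2,\b}(\Sigma)$ to $C^{0,\b}(\Sigma)$ — shows that $\de_j$ converges to a constant $\de$ and that $u(r)$ solves \eqref{EQ} with this $\de$; by the equivalence stated just before \eqref{EQ} this means $H(r,u(r),g)\equiv -n+\de$, so $S_r(u(r))$ has constant mean curvature. Moreover $u(r)$ has zero mean value (each $u^{\perp}_j$ does and $C^{2,\b}$‑convergence preserves integrals), and passing to the limit in the uniform estimates gives $|u(r)|_{C^{2,\b}(\Sigma)}\le K e^{-r(\a-2)}$ and $|\de|\le c\,e^{-r(\a-4)}$, the asserted bounds (with $C=K$).
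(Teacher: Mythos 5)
Your proposal follows the paper's construction very closely up to the final step: same reduction to the projected equation \eqref{EQ}, same iterative scheme \eqref{iterative}--\eqref{delta}, same use of Lemma \ref{BoundL} and of the estimates \eqref{Proper2H}--\eqref{Proper3H}, and the same inductive bookkeeping showing $|u_j^{\perp}|_{C^{2,\b}}\lesssim e^{-r(\a-2)}$ and $|\de_j|\lesssim e^{-r(\a-4)}$, with $\a>4$ entering exactly where you say it does. Where you diverge is in how convergence is obtained: the paper stops at a uniform bound on $\{u_j^{\perp}\}$ and invokes Ascoli--Arzel\`a to extract a $C^2$-convergent subsequence, whereas you prove a genuine contraction estimate $|u^{\perp}_{j+1}-u^{\perp}_j|_{C^{2,\b}}\le\theta\,|u^{\perp}_j-u^{\perp}_{j-1}|_{C^{2,\b}}$ with $\theta<1$ for $r$ large. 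Your route buys two real improvements: (a) a subsequential limit of an iteration $u_{j+1}=T(u_j)$ is not automatically a fixed point of $T$, so the paper's passage to the limit in the scheme is delicate, while your Cauchy-sequence argument makes it immediate; (b) you are more careful that the constants in the induction do not degrade with $j$ (the paper's $C_j$ are a priori $j$-dependent before it asserts uniform boundedness), and you get convergence in $C^{2,\b}$ rather than only $C^2$. The price is that your contraction step uses Lipschitz (difference) versions of the bounds \eqref{Proper3H} on $E^L$, $E^Q$ and of \eqref{BoundQ} on $Q$, which the paper states only at a single $u$; these follow from the explicit formulas for the error terms on the ball $|u|_{C^{2,\b}}\le Ke^{-r(\a-2)}$, but you should flag that this verification is needed --- it is the only ingredient of your argument not literally available in the preliminaries, and it is at the same level of routine computation as the estimates the paper already carries out.
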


\begin{proof}
	Following the description in this section, given one end $(N,g)$ in a $C^{2,\b}$-asymptotically cuspidal manifold, we define, for any positive number $r$ and positive function $u$, the hypersurface $S_r(u)$. Since to solve $H(r,u,g)=-n+\de$, for some constant $\de$, is equivalent to solve \eqref{EQ}, we will consider the sequence $\{u_j^\perp\} $ in $C^{2,\beta}(\Sigma)$ defined by the iterative method \eqref{iterative} given by
	\begin{equation*}
	\begin{split}
	u_0^\perp&=0\\
	\lap_{\bg}\ps_{j+1}^\perp&=e^{-2r}\left[n(\sqrt{1+E^F(0)}-1)+E^H(0)-\de_j \sqrt{1+E^F(0)}+E^L(u_j^\perp)+e^{2r}Q(u_j^\perp)+E^Q(u_j^\perp)\right],
	\end{split}
	\end{equation*}
	where $\de_j$ is defined in \eqref{delta}.
	
	First, we will estimate $\de_j$. We have that
	\begin{equation*}
	\begin{split}
	|\de_0|=&\left(\int_{\Sigma}\sqrt{1+E^F(0)}\right)^{-1}\left|\int_{\Sigma}n(\sqrt{1+E^F(0)}-1)+E^H(0)\right|\\
	\leq&\left(\int_{\Sigma}\sqrt{1+E^F(0)}\right)^{-1}|\Sigma|\left|n(\sqrt{1+E^F(0)}-1)+E^H(0)\right|_{C^{0,\b}(\Sigma)}.
	\end{split}
	\end{equation*}
	
	Then, using \eqref{Proper2H}, there exists $R'''$ such that for every $r>R'''$ there exists a constant $c_0$ such that
	\begin{equation}
	\label{EstDe}
	|\de_0|\leq\,c_0e^{-r\a}.
	\end{equation}
	
	Moreover, we have that
	\begin{equation*}
	\begin{split}
	|\de_j|\leq&\, |\de_0|+\left(\int_{\Sigma}\sqrt{1+E^F(0)}\right)^{-1}\left|\int_{\Sigma}E^L(u_j^\perp)+e^{2r}Q(u_j^\perp)+E^Q(u_j^\perp)\right|\\
	\leq&\, |\de_0|+\left(\int_{\Sigma}\sqrt{1+E^F(0)}\right)^{-1}|\Sigma|\left|E^L(u_j^\perp)+e^{2r}Q(u_j^\perp)+E^Q(u_j^\perp)\right|_{C^{0,\b}(\Sigma)},
	\end{split}
	\end{equation*}
	
	and by definition of $Q$, we get $$|Q(u)|_{C^{0,\b}(\Sigma)}\leq \left(4+\left|\frac{n}{1+E^F(0)}-2\right|\right)|u|^2_{C^{2,\b}(\Sigma)}.$$
	Then, there exists $R''>0$ such that for every $r>R''$ there exists a constant $C_Q>0$, independent of $r$ and $u$, such that
	\begin{equation}
	\label{BoundQ}
	\left|Q(u)\right|_{C^{0,\b}(\Sigma)}\leq C_Q|u|^2_{C^{2,\b}(\Sigma)}.
	\end{equation}
	
	Therefore, using estimates \eqref{Proper3H}, \eqref{EstDe} and \eqref{BoundQ}, there exists $R=\max\{R', R'', R'''\}$ such that for every $r>R$ the following holds
	\begin{equation}
	\label{EstDej}
	|\de_j|\leq\,c_0e^{-r\a}+\mathcal{C}_3 e^{-r(\a-4)}|u_j^\perp|_{C^{2,\b}(\Sigma)}+e^{2r}C_Q|u_j^\perp|^2_{C^{2,\b}(\Sigma)}+\mathcal{C}_4 e^{-r(\a-4)}|u_j^\perp|^2_{C^{2,\b}(\Sigma)}.
	\end{equation}

	Now, we will estimate $u_j^\perp$. Following the iterative scheme \eqref{iterative}, we have that $u_1^\perp$ is defined by 
	\begin{equation*}
	\begin{split}
	\lap_{\bg}\ps_1=\lap_{\bg}\ps_1^\perp=e^{-2r}\left[n(\sqrt{1+E^F(0)}-1)+E^H(0)-\de_0 \sqrt{1+E^F(0)}\right].
	\end{split}
	\end{equation*}
	
	Then, using Lemma \ref{BoundL} and estimates \eqref{Proper2H} and \eqref{EstDe} we get that for every $r>R'''$ there exists a constant $C_1$ such that
	\begin{equation*}
	\begin{split}
	|u_1^\perp|_{C^{2,\b}(\Sigma)}&\leq C_L e^{-2r}\left|n(\sqrt{1+E^F(0)}-1)+E^H(0)-\de_0 \sqrt{1+E^F(0)}\right|_{C^{0,\b}(\Sigma)}\leq C_1e^{-r\a}.
	\end{split}
	\end{equation*}
	
	Moreover, this and \eqref{EstDej} imply that there exists a constant $c_1$ such that 
	$$|\de_1|\leq c_1e^{-r(\a-4)}.$$
	
	Now, $u_2^\perp$ is defined by 
	\begin{equation*}
	\begin{split}
	\lap_{\bg}\ps_2^\perp=e^{-2r}\left[n(\sqrt{1+E^F(0)}-1)+E^H(0)-\de_1 \sqrt{1+E^F(0)}+E^L(u_1^\perp)+e^{2r}Q(u_1^\perp)+E^Q(u_1^\perp)\right].
	\end{split}
	\end{equation*}
	
	Then, using Lemma \ref{BoundL}, inequalities \eqref{Proper2H}, \eqref{Proper3H}, \eqref{BoundQ} and \eqref{EstDe} and the previous estimates for $u_1^\perp$ and $\de_1$, we obtain that for every $r>R$ there exists a constant $C_2$ such that
	\begin{equation*}
	\begin{split}
	|u_2^\perp|_{C^{2,\b}(\Sigma)}
	&\leq C_2e^{-r(\a-2)}.
	\end{split}
	\end{equation*}
	
	Moreover, this and \eqref{EstDej} imply that there exists a constant $c_2$ such that 
	$$|\de_2|\leq c_2e^{-r(\a-4)}.$$
	
	Now, $u_{j+1}^\perp$ is given by
	\begin{equation*}
	\begin{split}
	\lap_{\bg}\ps_{j+1}^\perp=e^{-2r}\left[n(\sqrt{1+E^F(0)}-1)+E^H(0)-\de_j \sqrt{1+E^F(0)}+E^L(u_j^\perp)+e^{2r}Q(u_j^\perp)+E^Q(u_j^\perp)\right].
	\end{split}
	\end{equation*}
	
	Analogously the previous computations, using Lemma \ref{BoundL} and estimates \eqref{Proper2H}, \eqref{Proper3H}, \eqref{BoundQ} and \eqref{EstDe}, we obtain that for every $r>R$ is satisfied
	\begin{equation*}
	\begin{split}
	|u_{j+1}^\perp|_{C^{2,\b}(\Sigma)}
	&\leq C_L e^{-2r}\left|n(\sqrt{1+E^F(0)}-1)+E^H(0)\right|_{C^{0,\b}(\Sigma)}+C_L e^{-2r}|\de_j| \left|\sqrt{1+E^F(0)}\right|_{C^{0,\b}(\Sigma)}\\
	&\;\;+C_L\left(\mathcal{C}_3e^{-r(\a-2)}|u_j^\perp|_{C^{2,\b}(\Sigma)}+C_Q|u_j^\perp|_{C^{2,\b}(\Sigma)}^2+\mathcal{C}_4e^{-r(\a-2)}|u_j^\perp|_{C^{2,\b}(\Sigma)}^2\right).
	\end{split}
	\end{equation*}
	
	If we assume that for every $r>R$ there exists a constant $C_j$ such that $|u_j^\perp|_{C^{2,\b}(\Sigma)}\leq C_je^{-r(\a-2)}$ then \eqref{EstDej} implies that there exists 
	a constant $c_j$ such that $|\de_j|\leq c_je^{-r(\a-4)}.$ Thus, using the previous estimate for $u_{j+1}^\perp$, there exists a constant $C_{j+1}$ such that
	$$|u_{j+1}^\perp|_{C^{2,\b}(\Sigma)}\leq C_{j+1}e^{-r(\a-2)}.$$
	
	Therefore, by induction, the sequence $\{u_j^\perp\}$ is uniformly bounded. Since $u_j\in C^{2,\b}(\Sigma)$, Ascoli-Arzela Theorem implies that, up to a subsequence, $\{u_j^\perp\}$ converges to $u^\perp$ in $C^2(\Sigma).$ Thus, the sequence $\{\de_j\}$ converges to $\de$, defined by
	$$\de:= \,\de_0+\left(\int_{\Sigma}\sqrt{1+E^F(0)}\right)^{-1}\left(\int_{\Sigma}E^L(u^\perp)+e^{2r}Q(u^\perp)+E^Q(u^\perp)\right)$$
	
	Moreover, $u^\perp$ is a solution of
	$$\lap_{\bg}\ps^\perp=e^{-2r}\left[n(\sqrt{1+E^F(0)}-1)+E^H(0)-\de \sqrt{1+E^F(0)}+E^L(u^\perp)+e^{2r}Q(u^\perp)+E^Q(u^\perp)\right],$$
and there exists $R>0$ such that for every $r>R$ there exist constants $C>0$ and $c>0$ such that $$|u^\perp|_{C^{2,\b}(\Sigma)}\leq Ce^{-r(\a-2)},\quad |\de|\leq ce^{-r(\a-4)}.$$
 	
Therefore, $S_r(u^\perp)$ has constant mean curvature equal to $-n+\de$, with respect to the metric $g$. This concludes the proof.
\end{proof}

\section{Existence of a strongly stable CMC-foliation}\label{foliation}
In this section, following the implicit function approach in \cite{Am}, we will prove the existence of strongly stable constant mean curvature surfaces at each end $(N,g)$ near infinity. 

\begin{Definition}
	Let $\mathcal{G}(N)$ be the set of metrics $g$ on $N$ such that 
	$$|(g-g_N)_{ij}|_{C^{2,\b}(N)}\leq C e^{-r\a},\quad \a>4.$$
	 Moreover, the space $\mathcal{G}(N)$ has a distance function 
	$$d(g_1,g_2):=\sup_{(r,x)\in N}\left\{e^{r\a}|(g_1-g_2)_{ij}|_{C^{2,\b}(N)}\right\}$$
\end{Definition}

\begin{Definition}
	A constant mean curvature surface $S$ in $(N,g)$ is called strongly stable if its Jacobi operator
	$$J_S:=\lap_S+Ric(\nu,\nu)+|A|^2,$$
	is such that $$-\int_S J_S(\phi)\phi dS > 0,\quad \forall \phi \in C^{\infty}(S),\; \mbox{with}\; \int_S \phi dS=0.$$
	Here $Ric$ is the Ricci curvature of $(N,g)$, $\nu$ is the unit normal vector to $S$ and $A$ is the second fundamental form of $S.$ 
\end{Definition}

Thus, the main result in this section, which proves Theorem\ref{Main}(i)(iii), is the following

\begin{Theorem}
	\label{ThCMCfoliation}
Given $R>0$ there exists $\e_R>0$ and $\eta_0>0$ such that for all positive $\eta<\eta_0$ the following property holds: For every $g\in \mathcal{G}(N)$ with $d(g,g_N)<\e_R$ and for every $r>R$, there exists a unique function $u(r,g)\in C^{2,\b}(\Sigma)$ with 
$$|u(r,g)|_{C^{2,\b}(\Sigma)}<\eta,\quad \int_{\Sigma} u=0,$$ such that $S_{r}(u(r,g))$ has constant mean curvature with respect to the metric $g.$

Moreover, $\{S_{r}(u(r,g))\}_{r\in (R,\infty)}$ gives a foliation in a region of $(N,g)$ by strongly stable constant mean curvature surfaces.
\end{Theorem}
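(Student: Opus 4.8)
The plan is to reformulate the CMC condition as a zero of a smooth map and apply the implicit function theorem with parameters $(r,g)$, building on the expansion of the mean curvature operator collected in \eqref{ProperH}--\eqref{Proper3H}. Concretely, fix the splitting $C^{2,\b}(\Sigma)=\R\oplus C^{2,\b}_0(\Sigma)$ into constants and zero-mean functions. For $u\in C^{2,\b}_0(\Sigma)$ define the projected mean curvature map
\[
\Psi(r,g,u):=\Pi_0\!\left(\sqrt{1+E^F(0)}\,\big(H(r,u,g)+n\big)\right)\in C^{0,\b}_0(\Sigma),
\]
where $\Pi_0$ is the $L^2$-orthogonal projection onto zero-mean functions; then $S_r(u)$ has constant mean curvature with respect to $g$ exactly when $\Psi(r,g,u)=0$ (the constant $\de$ is recovered from the complementary projection, as in \eqref{delta}). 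At $u=0$, $g=g_N$, one has $\Psi(r,g_N,0)=\Pi_0(-n+n)=0$ by the last displayed identity in Section \ref{notation}, so $(u=0)$ is an exact solution of the model problem for every $r$.

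The next step is to compute the linearization $D_u\Psi(r,g_N,0)$. By Proposition \ref{LemaL} (with $h=0$, hence $E^L\equiv 0$ and $E^F\equiv 0$) this linearization is exactly $u\mapsto -e^{2r}\lap_{\bg}u$, which by Lemma \ref{BoundL} is an isomorphism from $C^{2,\b}_0(\Sigma)$ onto $C^{0,\b}_0(\Sigma)$, with inverse bounded by $C_L e^{-2r}$. Since $\Psi$ depends smoothly (indeed $C^1$ suffices) on $(r,g,u)$ — the dependence on $g$ entering only through the error terms $E^F,E^H,E^L,E^Q$, which are polynomial in the components of $h=g-g_N$ and their first derivatives — the implicit function theorem yields, for each fixed $r>R$, a neighbourhood of $g_N$ in $\mathcal{G}(N)$ and a unique small $u(r,g)\in C^{2,\b}_0(\Sigma)$ with $\Psi(r,g,u(r,g))=0$. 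The point requiring care is \emph{uniformity in $r$}: the inverse of the linearization degenerates like $e^{-2r}$, so one must check that the radius $\e_R$ and the size $\eta_0$ can be chosen independently of $r\in(R,\infty)$. This is exactly what the estimates \eqref{Proper3H} provide — the quadratic remainder $Q$ obeys $|Q(u)|\le C_Q|u|^2$ with $C_Q$ independent of $r$ (see \eqref{BoundQ}), and $e^{-2r}|E^L|, e^{-2r}|E^Q|$ carry the favourable factor $e^{-r(\a-4)}$ since $\a>4$ — so a quantitative (Banach fixed point) version of the implicit function theorem, run exactly as in the iteration \eqref{iterative}, closes the argument with constants depending only on $R$. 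I expect this uniform-in-$r$ bookkeeping to be the main obstacle; it is, however, essentially the content of the proof of Theorem \ref{ThCMCsurface} carried out with $g$ as an extra parameter.

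Finally, for the foliation and stability statements: smoothness of $(r,g)\mapsto u(r,g)$ gives that $r\mapsto S_r(u(r,g))$ is a $C^1$ family of embedded hypersurfaces; to see it is a genuine foliation of a region one computes the normal speed, i.e. $\partial_r\big(r+u(r,g)(x)\big)=1+\partial_r u(r,g)(x)$, and shows $|\partial_r u(r,g)|_{C^1}<1$, so the leaves are disjoint and sweep out an open set — the bound on $\partial_r u$ follows by differentiating the defining equation $\Psi(r,g,u(r,g))=0$ in $r$ and reusing the inverse estimate for $-e^{2r}\lap_{\bg}$, which shows $\partial_r u$ is exponentially small. For strong stability, observe that for the model slice $\Sigma_{r}$ in $(N,g_N)$ the Jacobi operator is $J=\lap_{\Sigma_r}+Ric_{g_N}(\d_r,\d_r)+|A|^2$, and a direct computation (using $R_{g_N}=e^{2r}R_{\bg}-n(n+1)$, $|A|^2=n$, and $Ric_{g_N}(\d_r,\d_r)=-n$) gives $-\int J\phi\,\phi = e^{-2r}\int|\overline{\nabla}\phi|^2_{\bg}>0$ for all zero-mean $\phi\neq 0$; that is, the model leaves are strongly stable with a spectral gap quantified by the first nonzero eigenvalue $\lambda_1(\lap_{\bg})>0$. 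Since the Jacobi quadratic form depends continuously on $(r,g,u)$ through the $C^{2,\b}$-norm of $h$ — after rescaling $\phi$ appropriately so the leading term is scale-invariant — this positivity persists for $d(g,g_N)<\e_R$ and $r>R$ (shrinking $\e_R$ if necessary), giving strong stability of every leaf.
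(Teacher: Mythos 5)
Your proposal follows essentially the same route as the paper: the paper likewise sets up the projected map $\Psi(r,u,g)=H(r,u,g)-\frac{1}{|\Sigma|}\int_{\Sigma}H(r,u,g)$ between the zero-mean spaces, shows $D_u\Psi(r,0,g_N)=-e^{2r}\lap_{\bg}$ is an isomorphism and invokes the implicit function theorem, obtains the foliation property from $\d_r(r+u(r,g))>0$ by continuity from the model case $u(r,g_N)\equiv 0$, and deduces strong stability by continuity from the model computation $J_{S_r(0)}=e^{2r}\lap_{\bg}$ (where $Ric(\d_r,\d_r)+|A|^2=-n+n=0$). One small correction to your setup: your $\Psi$ carries the non-constant weight $\sqrt{1+E^F(0)}$, so $\Psi=0$ only forces $\sqrt{1+E^F(0)}\,(H+n)$, not $H$ itself, to be constant; use the unweighted projection of $H$ (as the paper does) so that the zero set is exactly the CMC condition.
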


\begin{proof}
	Consider the spaces
	$$E:=\{u\in C^{2,\b}(\Sigma)\mid \int_{\Sigma} u=0\},\quad F:=\{v\in C^{0,\b}(\Sigma)\mid \int_{\Sigma} v=0\}.$$
	Given $R>0$, we define the $C^1$-map
	$$\Psi: (R,\infty)\times C^{2,\b}(\Sigma)\times \mathcal{G}(N)\to F,\quad \Psi(r,u,g)=H(r,u,g)-\frac{1}{|\Sigma|}\int_{\Sigma}H(r,u,g),$$
	where $H(r,u,g)$ is the mean curvature of $S_r(u)$ with respect to the metric $g.$
	
	Note that $\Psi(r,u,g)=0$ if and only if $H(r,u,g)$ is constant. Therefore, $\Psi(r,0,g_N)=0.$
	
	Moreover, Lemma \ref{LemaL} implies that the differential map $D_u\Psi(r,0,g_N): C^{2,\b}(\Sigma)\to F$ is defined by
	$$D_u\Psi(r,0,g_N)=\frac{d}{dt}\Big|_{t=0}H(r,tu,g_N)-\frac{1}{|\Sigma|}\int_{\Sigma}\frac{d}{dt}\Big|_{t=0}H(r,tu,g_N)=-e^{2r}\lap_{\bg}\ps,$$
	and then, $D_u\Psi(r,0,g_N)$ restricted to $E$ is an isomorphism. 
	Therefore, Implicit Function Theorem implies that there exists $\e_R>0$ and $\eta_0>0$ such that for all positive $\eta<\eta_0$, for every $r>R$ and every $g\in \mathcal{G}(N)$ with $d(g,g_N)<\e_R$ there exists a $C^1$-function $$(r,g)\to u(r,g)\in E,\quad \mbox{with }\, |u(r,g)|_{C^{2,\b}(\Sigma)}<\eta,\quad u(r,g_N)=0,$$ uniquely defined by $\Psi(r,u(r,g),g)=0.$
	
	Thus, we construct a family of surfaces $\{S_r(u(r,g))\}$, where $u(r,g)\in E$ with constant mean curvature with respect to the metric $g.$ To prove that family is a foliation, it is necessary to prove that $\d_r(r+u(r,g))>0.$ 
	
	Since, for $g_N$ the function $u(r,g_N)\equiv0$ then $\d_r(r+u(r,g_N))>0.$ Hence, by continuity, \linebreak $\d_r(r+u(r,g))$ is also positive for all metrics $g\in\mathcal{G}(N)$ with $d(g,g_N)<\e_R$, at least when we choose a possibly smaller $\e_R.$
	
	Finally, to prove that the CMC-surfaces in the family $\{S_r(u(r,g))\}$ are strongly stable, it is necessary to study their Jacobi operator $J$. 
	
	For the CMC-surfaces $S_r(u(r,g_N))=S_r(0)$ we have that their Jacobi operator are given by
	$$J_{S_r(0)}:=\lap_{S_r(0)}+Ric(\d_r,\d_r)+|A|^2=e^{2r}\lap_{\bg},$$ thus, for every $\phi \in C^{\infty}(S_r(0)),\; \mbox{with}\; \int_{S_r(0)} \phi=0$ is satisfied
	$$-\int_{S_r(0)} J_{S_r(0)}(\phi)\phi=-\int_{S_r(0)}e^{2r}\phi\lap_{\bg}\phi=e^{2r}\int_{S_r(0)}|\overline{\nabla}\phi|^2_{\bg} > 0.$$
	Therefore, the foliation $\{S_r(u(r,g_N))\}$ consists in strongly stable CMC-surfaces. And also, by continuity of $u$ and arguing by contradiction, we conclude that for a possibly smaller $\e_R$ such that for every $g\in\mathcal{G}(N)$ with $d(g,g_N)<\e_R$ the Jacobi operator $J_{S_r(u(r,g))}$ satisfies
	$$-\int_{S_r(u(r,g))} J_{S_r(u(r,g))}(\phi)\phi > 0,\quad \forall \phi \in C^{\infty}({S_r(u(r,g))}),\; \mbox{with}\; \int_{S_r(u(r,g))} \phi=0.$$
\end{proof}

\section{Isoperimetric profile}\label{isoperimetric}
In this section we will prove Theorem \ref{Main}(iv).

\begin{Definition}
Let $(M,g)$ be a Riemannian manifold with finite volume equal to $V(M).$ We define the isoperimetric profile as a function
$$I:(0,V(M))\to \R,\quad I(v):=\min\{A(\d\Omega_v)\mid \Omega_v \subset M, V(\Omega_v)=v\},$$
where $A(\d \Omega_v)$ is the area of the boundary of $\Omega_v$ and $V(\Omega_v)$ is the volume of $\Omega_v.$

Moreover, we say that $\Omega_v$ is an isoperimetric region with volume $v$ if $I(v)=A(\d\Omega_v)$.
\end{Definition}

Since an asymptotically cuspidal manifold $(N,g)$, according to Definition \ref{ACdef}, is a complete non-compact manifold with finite volume equal to $V(N)$, \cite[Theorem 2.1]{RR} implies existence of isoperimetric regions for every volume and also that its boundary has constant mean curvature. Moreover, every region $S$ such that its boundary $\d S$ has constant mean curvature $\lambda$ is a critical point of the functional $A(\d S)+\lambda V(S)$ (see \cite{P}). 

Therefore, if $H_v$ denotes the mean curvature of $\d \Omega_v$ then $\Omega_v$ is a critical point of 
\begin{equation}
\label{critical}
A(\d \Omega_v)+H_v V(\Omega_v).
\end{equation}

Thanks to \cite[Theorem 18(c)]{Ros} and \cite{N}, it is enough to prove the following generalization of \cite{AM} for the negatively curved exact model.
\begin{Theorem}
	\label{Thisoperimetric}
Let $(N,g)$ be an end of an asymptotically cuspidal manifold with finite volume equal to $V(N).$ There exists $\e>0$ (small) such that for every $v<\e$ there exists $r_v$ and $u(r_v,g)\in C^{2,\b}(\Sigma)$ with zero mean value such that the boundary of the isoperimetric region $\Omega_v$ coincides with $S_{r_v}(u(r_v,g)).$
\end{Theorem}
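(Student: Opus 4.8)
The strategy is to combine the small-volume isoperimetric existence theory (already recalled: \cite{RR} guarantees an isoperimetric region $\Omega_v$ for every $v\in(0,V(N))$, and its boundary is a CMC hypersurface) with the local uniqueness of CMC hypersurfaces near infinity furnished by Theorem \ref{ThCMCfoliation}. The core idea is: as $v\to 0$, the isoperimetric region $\Omega_v$ must escape every compact subset of $N$ (because $N$ has finite total volume but the ``thin'' part near the cusp is where one can enclose tiny volume most efficiently), and once $\partial\Omega_v$ lies deep enough in the end it is \emph{forced} to be one of the leaves $S_r(u(r,g))$ by the uniqueness statement. So the proof has three movements: (1) show $\partial\Omega_v$ is a graph $S_r(u)$ over some slice $\Sigma_r$ with $r=r_v\to\infty$ and $|u|_{C^{2,\b}}$ small as $v\to 0$; (2) invoke Theorem \ref{ThCMCfoliation} to conclude $u=u(r_v,g)$; (3) check the quantitative statement that $r_v\to\infty$ is compatible with $v<\e$ for small $\e$.

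\textbf{Step-by-step.} First I would record the comparison estimate: using the slices $\Sigma_r$ themselves as competitors, $V(\{s>r\})=\int_r^\infty A(\Sigma_s)\,ds$ decays like $e^{-nr}$ (area of slices decays like $e^{-nr}$ by Definition \ref{ACdef} and the remarks following it), while $A(\Sigma_r)\sim c\,e^{-nr}$; hence $I(v)\le A(\Sigma_{r(v)})$ where $r(v)$ is chosen so that $V(\{s>r(v)\})=v$, giving $I(v)\lesssim v$ (with the linear rate reflecting the $-n$ mean curvature of slices). Next, a concentration-compactness / monotonicity argument: if a sequence $v_k\to 0$ had isoperimetric regions $\Omega_{v_k}$ not escaping to infinity, a subsequence would (after possibly splitting off pieces, but small volume forbids splitting at fixed scale) converge to a minimizer of zero volume or concentrate at an interior point, where the local isoperimetric profile behaves like the Euclidean one, $I(v)\sim c_n v^{n/(n+1)}$, which for small $v$ is \emph{larger} than the $\lesssim v$ bound above — contradiction. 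Therefore $\Omega_{v_k}$ eventually lies in $M\setminus K$, in fact in $\{s>R_k\}$ with $R_k\to\infty$. Then standard regularity and curvature estimates for the CMC boundary $\partial\Omega_{v_k}$ (its mean curvature $\lambda_{v_k}$ is bounded, indeed $\to -n$, by the first variation identity \eqref{critical} and the comparison $A(\partial\Omega_v)/V(\Omega_v)\to n$), together with the product structure of the end, let me write $\partial\Omega_{v_k}$ as a normal graph $S_{r_{v_k}}(u_k)$ over a slice, with $r_{v_k}\to\infty$ and $|u_k|_{C^{2,\b}(\Sigma)}\to 0$; connectedness of $\partial\Omega_v$ for small $v$ (an isoperimetric boundary with small volume and small area is connected) ensures it is a single such graph rather than a union of slices at different heights. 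Finally, apply Theorem \ref{ThCMCfoliation}: for $r_{v_k}>R$ and $|u_k|_{C^{2,\b}}<\eta_0$ the CMC graph over $\Sigma_{r_{v_k}}$ with zero mean value is \emph{unique}, so $u_k=u(r_{v_k},g)$; after replacing $u_k$ by its zero-mean representative (translating the base slice, which does not change the hypersurface) this identifies $\partial\Omega_{v_k}=S_{r_{v_k}}(u(r_{v_k},g))$. Quantifying the thresholds ($\eta<\eta_0$, $r>R$) back in terms of $v$ produces the claimed $\e$.

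\textbf{Main obstacle.} The delicate point is Step (1): proving rigorously that $\partial\Omega_v$ escapes to infinity \emph{and} is a small graph over a single slice. The escape-to-infinity part requires ruling out the alternative that the minimizer sits in the compact core or straddles the core and the end; this is exactly where one uses that the end is ``thin'' (finite volume, exponentially small slices) so that enclosing a fixed small volume costs strictly less area out in the cusp than anywhere in a fixed compact region — this is the content of the comparison $I(v)\lesssim v$ versus the interior lower bound $I(v)\gtrsim v^{n/(n+1)}$, which must be made precise, and it is essentially where \cite{Ros, N} (cited right before the theorem) are invoked to localize the isoperimetric problem to the end. Once localized, the graphical description and the smallness of $u$ follow from elliptic regularity and the bounded-mean-curvature Allard-type estimates, and the identification with the foliation leaf is then immediate from Theorem \ref{ThCMCfoliation}. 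A secondary technical nuisance is matching the normalization: an isoperimetric CMC graph need not a priori have zero mean value over $\Sigma$, but shifting the base height $r\mapsto r'$ absorbs the mean, so one works with the pair $(r_v,u(r_v,g))$ produced by the foliation rather than an arbitrary $(r,u)$.
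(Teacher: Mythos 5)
Your overall architecture (small-volume isoperimetric regions exist by \cite{RR}, they must escape into the cusp, and once there they must be a leaf) matches the paper's, and your escape-to-infinity mechanism is an acceptable variant: you compare the linear upper bound $I(v)\lesssim v$ coming from slice competitors against the Euclidean-type lower bound $v^{n/(n+1)}$ at interior points, whereas the paper gets the same conclusion from the bound $|H_v|\le n$ (proved via the first variation identity \eqref{critical}) together with the monotonicity formula and a covering by small geodesic balls in the region where the injectivity radius is bounded below. Either route is viable, though yours still needs care with configurations that straddle the compact core and the end (you wave at this with ``small volume forbids splitting'').

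The genuine divergence, and the genuine gap, is in the identification step. You propose to show that $\partial\Omega_v$ is a normal graph $S_{r_v}(u_v)$ over a slice with $|u_v|_{C^{2,\b}(\Sigma)}$ small, and then invoke the uniqueness clause of Theorem \ref{ThCMCfoliation}. You correctly flag this as the main obstacle but do not resolve it, and it is not routine here: the cusp is collapsing ($\mathrm{inj}\to 0$, slice diameter $\sim e^{-r}$), so Allard-type or curvature estimates at a fixed scale do not directly yield a single graph over $\Sigma_r$ with controlled $C^{2,\b}$ norm; one would have to rescale or pass to a cover and then rule out multi-sheeted or topologically nontrivial configurations. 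The paper avoids this entirely: having localized $\Omega_v$ in $[r_v,\infty)\times\Sigma$, it slides the already-constructed leaf $S_{r}(u(r,g))$ down until first contact with $\partial\Omega_v$, applies the maximum principle for the CMC equation to two locally graphical solutions near the tangency point, and then uses analyticity (unique continuation) to conclude the two hypersurfaces coincide globally. This tangency argument needs no a priori graphical structure or smallness of $u_v$, only that both hypersurfaces are CMC with the appropriate ordering of their mean curvatures. If you want to keep your route, you must actually prove the global graph representation with small H\"older norm in the collapsed geometry; otherwise the cleaner fix is to replace your final step by the sliding/maximum-principle argument.
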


\begin{proof}
Since the slices $\Sigma_{r}$ have constant mean curvature equal to $-n$, with respect to the metric $g_N$, we will prove that the mean curvature of the boundary of a isoperimetric region, with respect to the metric $g$, is bounded by $n$. More precisely, for every sequence $\{v_j\}$ which converges to zero there exists a sequence of isoperimetric regions $\{\Omega_{v_j}\}$ which minimizes the functional $A-nV$ then the mean curvature of the boundary of $\Omega_{v_j}$, with respect to the metric $g$, satisfies $H_{v_j}\geq -n.$ Arguing by contradiction, let $\{v_j\}$ be a sequence which converges to zero, then any isoperimetric regions $\{\Omega_{v_j}\}$ which minimize the functional $A-nV$ satisfy $H_{v_j}< -n.$ Moreover, using \eqref{critical} we have
$$0\leq A'(\d \Omega_{v_j})-nV'(\Omega_{v_j})=-H_{v_j}V'(\Omega_{v_j})-nV'(\Omega_{v_j})=-(H_{v_j}+n)V'(\Omega_{v_j}).$$
Since, $\{v_j\}$ converges to zero, then $(H_{v_j}+n)\geq 0.$ This is a contradiction with $H_{v_j}< -n.$

Therefore, if we choose always the normal vector such that the mean curvature in non-positive, we can choose $\e>0$ such that for every $v<\e$ there exists an isoperimetric region $\Omega_v$ which is a minimizer of $A-nV$ and $|H_v|\leq n.$

Moreover, since the injectivity radius of $(N,g)$ at $p\in \d\Omega_v$ is bounded below, a standard covering argument using disjoint geodesic spheres of small volume, shows, throught the monotonicity formula for submanifolds, that for every $v<\e$ there exists $r_v$ such that the isoperimetric region $\Omega_v$ is contained in $[r_v,\infty)\times \Sigma.$

Now, we move the constant mean curvature surface $S_{r_v}(u(r_v,g))$, constructed in Theorem \ref{ThCMCsurface}, until the first contact with $\d\Omega_v$. Let $p$ be the contact point, and $B_{\rho}(p)$ be a geodesic spheres of small volume near $p$, then we may represent both hypersurfaces, inside the spheres, by graphs of functions $f_1$ and $f_2$, respectively. Applying the maximum principle for the constant mean curvature equation, we get
$$S_{r_v}(u(r_v,g))\cap B_{\rho}(p)=\d\Omega_v\cap B_{\rho}(p),\quad \mbox{for some small }\, \rho.$$
Since $f_1$ and $f_2$ satisfy the elliptic PDE for constant mean curvature, $f_1$ and $f_2$ are analytic. Therefore, $S_{r_v}(u(r_v,g))$ coincides with $\d\Omega_v.$
\end{proof}

\bigskip
\bigskip
\medskip
\begin{flushright}
	{\it Abdus Salam's International Centre for Theoretical Physics, \\
	ICTP\\
		 Strada Costiera 11, 34151, Trieste, Italy.\\}
		  arezzo@ictp.it, \hspace{.3mm} kcorrale@ictp.it
\end{flushright}

\begin{thebibliography}{00}
	\bibitem{AM}  C. Adams and F. Morgan, Isoperimetric curves on hyperbolic surfaces, {\it Proc. Amer. Math. Soc.} {\bf{127}} (1999), no. 5, 1347--1356.
	
	\bibitem{Alex} A. D. Alexandrov, Uniqueness theorems for surfaces in the large I, {\it Vesnik Leningrad Univ.} {\bf 11} (1956), 5--17.

	\bibitem{Am} L. Ambrozio, On perturbations of the Schwarzschild Anti-de Sitter spaces of positive mass, {\it Comm. Math. Phys.} {\bf 337} (2015), 767--783.

	\bibitem{Br} H. Bray, The Penrose inequality in general relativity and volume comparison theorems involving scalar curvature, {\it PhD thesis, Stanford University} (1997).

	\bibitem{B} S. Brendle, Constant mean curvature surfaces in warped product manifolds, {\it Publ. Math. Inst.IHES} {\bf 117} (2013), 247--269.
	
	\bibitem{Ch} O. Chodosh, Large isoperimetric regions in asymptotically hyperbolic manifolds, {\it Comm. Math. Phys.} {\bf 343} (2016), 393--443.
	
	\bibitem{CEV} O. Chodosh, M. Eichmair and A. Volkmann, Isoperimetric structure of asymptotically conical manifolds {\it  J. Diff. Geom.} {\bf{105}} (2017), 1--19.
	
	\bibitem{EM} M. Eichmair and J. Metzger, On large volume preserving stable CMC surfaces in initial data sets, {\it J. Diff. Geom.} {\bf 91} (2012), 81--102. 
	
	\bibitem{EM2} M. Eichmair and J. Metzger. Unique isoperimetric foliations of asymptotically flat manifolds in all dimensions, {\it Invent. Math.} {\bf 194} (2013), 591--630.
	
	\bibitem{H} Lan-Hsuan Huang, Foliations by stable spheres with constant mean curvature  for isolated systems with general asymptotics, {\it Comm. Math. Phys.}
{\bf 300} (2010), 331--373.
	
	\bibitem{HuY} G. Huisken and S.T. Yau, Definition of center of mass for isolated physical systems and unique foliations by stable spheres with constant mean curvature, {\it Invent. Math.} {\bf 124} (1996), 281--311.
	
	\bibitem{L} S. Lancini, Complete constant scalar curvature metrics with funnel-like ends and relativistic initial data foliated by constant mean curvature tori, {\it PhD Thesis, Scuola Normale Superiore Pisa} (2018).
	
	\bibitem{MMP} F. Mahmoudi, R. Mazzeo and F. Pacard,  Constant mean curvature hypersurfaces condensing on a submanifold,  {\emph{GAFA}} {\bf 16} (2006), 924--958.
	
	\bibitem{MP} R. Mazzeo and F. Pacard, Constant curvature foliations in asymptotically hyperbolic spaces, {\it Rev. Mat. Iberoam.} {\bf 27} (2011), 303--333. 
	
	\bibitem{N} S. Nardulli, The isoperimetric profile of a smooth Riemannian manifold for small volumes, {\it Ann. Glob. Anal. Geom.} {\bf 36} (2009), 111--131.
	
	\bibitem{Ne} C. Nerz, Foliations by stable spheres with constant mean curvature for isolated systems without asymptotic symmetry, {\it Calc. Var.} {\bf 54} (2015), 1911--1946. 
	
	\bibitem{NT} A. Neves and G. Tian, Existence and uniqueness of constant mean curvature foliation of asymptotically hyperbolic 3-manifolds, {\it Geom. Funct. Anal.} {\bf 19} (2009), 910--942.
	
	\bibitem{NT2} A. Neves and G. Tian, Existence and uniqueness of constant mean curvature foliation of asymptotically hyperbolic 3-manifolds II, {\it J.Reine Angew. Math.} {\bf 641} (2010), 69--93.
	\bibitem{P}	F. Pacard, Constant mean curvature hypersurfaces in Riemannian manifolds. {\it Riv. Mat. Univ.Parma} {\bf 7} (2005), 141--162.
	
	\bibitem{PX} F. Pacard and X. Xu, Constant mean curvature spheres in Riemannian manifolds. {\it Manuscripta. Math.} {\bf 128} (2009), 275--295.
	
	\bibitem{QT} J. Qing and G. Tian. On the uniqueness of the foliation of spheres of constant mean curvature in asymptotically flat 3-manifolds, {\it J. Amer. Math. Soc.} {\bf 20} (2007), 1091--1110.
	
	\bibitem{R}	R. Rigger, The foliation of asymptotically hyperbolic manifolds by surfaces of constant mean curvature, {\it Manuscripta Math.} {\bf 113} (2004), 403--421.
		  
	\bibitem{RR} M. Ritor\'e and C. Rosales, Existence and characterization of regions minimizing perimeter under a volume constraint inside euclidean cones, {\it Trans. A.M.S.} {\bf 356} (2004), 4601--4622.
	\bibitem{Ros} A. Ros. The isoperimetric problem, {\it Global theory of minimal surfaces, Clay Math. Proc.} {\bf 2} {\it Amer. Math. Soc.} (2005), 175--209.
	
	\bibitem{Ye} R. Ye, Foliation by constant mean curvature spheres. {\it Pacific J. Math.} {\bf 147} (1991), 381--396. 
\end{thebibliography}
\end{document}